\def\bG{\bar{\Gamma}}
\def\bS{\bar{\Sigma}}
\def\bV{\bar{V}}
\def\bU{\bar{U}}
\def\tg{\tilde{\gamma}}
\def\ts{\tilde{\sigma}}
\def\tv{\tilde{v}}
\def\tu{\tilde{u}}
\newcounter{dummy}
\newtheorem{theorem}{Theorem}
\newtheorem{lemma}{Lemma}[section]
\newtheorem{proposition}{Proposition}[section]
\newtheorem{definition}{Definition}[section]
\newtheorem{remark}{Remark}[section]
\newtheorem{step}{Step}[dummy]
\numberwithin{step}{dummy}
\begin{document}
\title{Existence of localizing solutions in plasticity \\ via the geometric singular perturbation theory}
\author{Min-Gi Lee\footnotemark[1]\  \footnotemark[2]
\and Athanasios Tzavaras\footnotemark[1]}
\date{}

\maketitle
\renewcommand{\thefootnote}{\fnsymbol{footnote}}
\footnotetext[1]{Computer, Electrical and Mathematical Sciences \& Engineering Division, King Abdullah University of Science and Technology (KAUST), Thuwal, Saudi Arabia}
\footnotetext[2]{Corresponding author : \texttt{mingi.lee@kaust.edu.sa}}
\renewcommand{\thefootnote}{\arabic{footnote}}

\begin{abstract}
Shear bands are narrow zones of intense shear observed during  plastic deformations of metals at high strain rates. Because they often precede rupture, 
their study attracted attention as a mechanism of material failure.  
Here, we aim to reveal the onset of localization into shear bands using a simple model developed from viscoplasticity.
We  exploit the properties of scale invariance of the model to construct a family of self-similar focusing solutions that capture 
the nonlinear mechanism of  shear band formation. The key step is to de-singularize a reduced  system of singular ordinary differential equations 
and reduce the problem into the construction of a heteroclinic orbit for an autonomous system  of three first-order equations. 
The associated  dynamical system has  fast and slow time scales, forming a singularly perturbed problem.  
Geometric singular perturbation theory is applied to this problem to achieve an invariant surface. The flow on
the invariant surface  is analyzed via the Poincar\'{e}-Bendixson theorem to construct a heteroclinic orbit. 
\end{abstract}

%

\section{Introduction} \label{sec:intro}
We consider  the system of partial differential equations
\begin{align}
 \begin{aligned}
 \gamma_t&= v_x, \\
 v_t &= \big( \gamma^{-m} v_x^n\big)_x,
 \end{aligned}\label{intro-system}
\end{align}
where $(x,t)\in \mathbb{R} \times \mathbb{R}^+$,
which describes shear motions of a viscoplastic material and in terms of classification 
belongs to the class of hyperbolic-parabolic systems.
Here, $\gamma$ is the plastic strain, $v$ is the velocity in the shearing direction,
and $m,n>0$ are material parameters. 
The system \eqref{intro-system} is a model from viscoplasticity that serves as a simplified model to understand the problem of 
shear band formation in metals deformed at high strain rates (see Section  \ref{sec2}).  
The yield relation $\sigma = \gamma^{-m} \gamma_t^n$ characterizes the viscoplastic nature of materials: 
$\gamma^{-m}$ accounts for plastic (net) strain softening and $\gamma_t^n$ for strain-rate hardening. The latter term models dissipation 
by momentum diffusion manifested by mathematical viscosity in the form present in non-Newtonian fluids. 

For $n=0$, the system \eqref{intro-system}  is elliptic in the $t$-direction and exhibits {\it Hadamard instability} - the catastrophic growth of oscillations for the linearized  
initial value problem - induced by the (net) strain-softening response.  But when $n>0$, the viscosity competes against this ill-posedness.  The combination
of the destabilizing effect of strain softening and the stabilizing effect of strain-rate hardening is conjectured to lead to localization of the strain in narrow zones  
called shear bands \cite{zener_effect_1944, clifton_rev_1990}. Their formation is helpful for explaining mechanisms of material failure;  we refer to
\cite{zener_effect_1944, clifton_critical_1984, shawki_shear_1989, clifton_rev_1990, wright_survey_2002, bertsch_effect_1991, tzavaras_nonlinear_1992}
and to Section  \ref{sec2} for further details on this problem.

To set the localization problem in the language of mathematical analysis, observe that
\eqref{intro-system} admits a class of solutions,  that are valid for any values of the parameters $m$ and $n$ and describe uniform shearing 
\begin{align}
\label{intro-uss}
 v_s(x) = x, \quad \gamma_s(t) = t + \gamma_0, \quad \sigma_s (t)  = (t + \gamma_0)^{-m}.
\end{align}
The issue then becomes to examine whether small perturbations of the uniform shearing solutions develop nonuniformities that go astray or whether nonuniformities
get suppressed resulting into  stable response. In the regime $n > m$, both  linearized and nonlinear analyses
\cite{tzavaras_plastic_1986, fressengeas_instability_1987, shawki_shear_1989, tzavaras_strain_1991, tzavaras_nonlinear_1992} indicate that the
uniform shearing solutions are stable.
On the complementary region $m > n$, an analysis of the linearized system of relative perturbations \cite{fressengeas_instability_1987, molinari_analytical_1987, tzavaras_nonlinear_1992} indicates instability of the uniform shearing solutions.

In this work, we aim to reveal the subtle mechanism of shear band formation in the nonlinear regime and to construct a class of self-similar solutions
that exhibit localization in the regime $m > n$. We exploit the invariance properties of the system \eqref{intro-system} and seek self-similar
solutions of the form
\begin{equation} \label{intro-ss}
\begin{aligned}
 \bar{ \gamma }(t,x) &=  (t + 1)^a\bG((t+1)^ \lambda x),\\
 \bar{v}(t,x)&= (t + 1)^b\bV((t+1)^ \lambda x) \, , 
\end{aligned}
\end{equation}
where  $\xi=(t+1)^ \lambda x$ is the similarity variable and $\lambda > 0$ is a parameter.  The reader should note that the usual form of self-similar solutions
for parabolic problems are generated for values of the parameter $\lambda < 0$ and capture the spreading effect associated with parabolic behavior. 
By contrast, we insist here on $\lambda > 0$ and study the existence of solutions focusing around the line $x =0$ as time proceeds. 
This idea for constructing localizing solutions is proposed  in \cite{katsaounis_emergence_2014} for a non-Newtonian fluid with temperature-dependent viscosity 
and in \cite{KLT_2016} for \eqref{intro-system} with $m=1$.

The parameters $a$ and $b$ are selected by
\begin{equation}
\label{defab}
 a^{\lambda,m,n}= \frac{2-n}{1+m-n} + \frac{2 \lambda}{1+m-n}, \quad b^{\lambda,m,n}= \frac{1-m}{1+m-n} + \frac{1-m+n}{1+m-n} \lambda
\end{equation}
and the profiles $(\bG, \bV)$ are constructed by solving an initial value problem for a singular system of ordinary differential equations
\begin{align} 
\label{intro-sseqns}
\begin{aligned}
 a^{\lambda,m,n}\bG + \lambda \xi \bG_\xi &= \bV_\xi,\\
 b^{\lambda,m,n}\bV + \lambda \xi \bV_\xi &= \big(\bG^{-m}\bV_\xi^n\big)_\xi,
\end{aligned}
\\
\label{intro-ssdata}
 \bG(0) = \bG_0  > 0 \, , \quad \bV_{\xi} (0)  = \bU(0) = \bU_0  > 0 \, , 
 \end{align}
where  $\bU(\xi) = \bV_\xi(\xi)$ and $\bG_0$ and $\bU_0$ are given positive parameters. As seen from $\eqref{intro-sseqns}_1$ at $\xi=0$, 
$$
 a^{\lambda,m,n} \Gamma_0 = U_0
 $$
and thus  two out of the parameters $\lambda$, $\bG_0$ and $\bU_0$ fix the third.

There is no sufficiently general theory that guarantees the existence of solutions for such singular initial value problems and the construction is
usually based on a case-by-case analysis.
Remarkably, the invariance properties of the system \eqref{intro-sseqns} allows the  de-singularization of the system \eqref{intro-sseqns}
(see \eqref{eq:tvars} and \eqref{eq:auto0}). Furthermore, a nonlinear change of variables (see \eqref{eq:ratios}) leads to reformulating the problem
into an autonomous system of three first-order equations
{\small
\begin{align} 
 \dot{p} &=p\Big( ~~~~~~\frac{1}{ \lambda }\big(r - \frac{2-n}{1+m-n}\big) - \frac{1-m+n}{1+m-n} &+&1-q- \lambda p r\Big), \nonumber \\
 \dot{q} &=q\Big(                                                                          &+&1-q- \lambda p r\Big) + b^{\lambda,m,n}pr,\label{intro-pqrsystem}\\
 n\dot{r}&=r\Big( \frac{m-n}{ \lambda }\big(r - \frac{2-n}{1+m-n}\big) + \frac{1-m+n}{1+m-n} &-&1+q+ \lambda p r\Big). \nonumber
\end{align}}
Moreover, the question of existence of a solution $\big(\bV,\bG)$ to \eqref{intro-sseqns}, \eqref{intro-ssdata} is reformulated 
to that of the construction of a suitable heteroclinic orbit to \eqref{intro-pqrsystem}. The difficulty with the construction of
such heteroclinics originates from the dimensionality of the system \eqref{intro-pqrsystem}.
In \cite{KLT_2016},  we considered a system related to the case $m=1$ and numerically constructed  the heteroclinic orbit.

The main result of this work is that by exploiting geometric singular perturbation theory, one may  construct the heteroclinic orbit.
As $n$ is a small parameter,  the system \eqref{intro-pqrsystem} admits both fast and slow time scales. Problems with multiple time scales are habitually found 
in multiple contexts, and one  gets a clear picture of the problem by analyzing the geometric picture in the phase space via geometric 
singular perturbation theory \cite{fenichel_persistence_1972, fenichel_geometric_1979, jones_geometric_1995, wiggins_normally_1994, KUEHN_2015}.
 Among many successful applications of the theory, there are several examples 
 \cite{gasser_geometric_1993, freistuhler_spectral_2002, xiao_stability_2003, SS_2004,ghazaryan_traveling_2007} of application to the resolution
 of viscous wave fans in hyperbolic conservation laws. We present a novel application of the method to
 analyze the nonlinear competition of Hadamard instability with viscosity effected by strain-rate hardening in dynamic plasticity.

The paper is organized as follows: In Section 2, we briefly explain the background and describe the mechanical problem studied in this paper. 
In Section 3,  a class of focusing self-similar solutions are introduced  and the associated system of singular ordinary differential equations is derived. 
The problem  is then reduced into the construction of a heteroclinic orbit of an associated autonomous system. 
In Section 4,  the phase space analysis is carried out for this autonomous system. 
We identify its equilibria,  study their dynamical nature and use mechanical considerations to  select the targeted heteroclinic orbit. 
In Section 5,  we  construct a normally hyparbolic invariant manifold using geometric singular perturbation theory (reviewed in the appendix) and study the
dynamical system restricted on  that manifold to establich the existence of the heteroclinic orbit.  
 The emerging two-parameter family of localizing solutions
to the system \eqref{intro-system} is outlined in section 6 (see \eqref{eq:sssol}), where various properties, such as the  range of parameters 
and growth behavior of the solution are scrutinized.

\section{Background and a Description of the Problem}
\label{sec2}
We investigate the formation of  shear bands during the high-strain-rate shear deformation of metals. 
At high strain rate, shear can accumulate  in narrow zones, often leading to rupture. 
Several works have focused on this behavior to explain material failure 
\cite{zener_effect_1944, clifton_critical_1984,  shawki_shear_1989, clifton_rev_1990, wright_survey_2002}.
In experimental investigations of deformations of steels at high strain-rates,  observations of shear bands are  typically associated 
with strain softening response -- past a critical strain -- of the measured stress-strain curve \cite{clifton_critical_1984}. 
It was proposed by Zener and Hollomon \cite{zener_effect_1944}, and further developed by Clifton et al  \cite{clifton_critical_1984, clifton_rev_1990},
that the effect of the deformation speed is twofold:
An increase in the deformation speed changes the deformation
conditions from isothermal to nearly adiabatic, and the combined effect of thermal softening and strain hardening of metals may produce 
a net softening response. On the other hand, strain-rate hardening has an effect {\it per se}, inducing momentum diffusion and playing a stabilizing role.

Modeling this mechanism requires to consider the effect of the energy equation. Nevertheless, a simper model has been proposed in order to
assess the effect of (net) strain softening
response in shear motions of a  viscoplastic model  to provide quantitative analysis in the problem of
localization \cite{HN77,tzavaras_plastic_1986,tzavaras_nonlinear_1992}. The system 
\begin{equation} \label{eq:system0}
 \begin{aligned}
   v_t &= \sigma(\gamma,\gamma_t)_x, \\ 
   \gamma_t&=v_x.
 \end{aligned}
\end{equation}
models shear motions of a viscoplastic material exhibiting strain softening  and strain-rate hardening
$$
 \frac{\partial\sigma}{\partial\gamma}<0 \, , \quad   \frac{\partial\sigma}{\partial \gamma_t } > 0 \, , 
$$
respectively. 
\eqref{eq:system0} consists of momentum conservation and kinematic compatibility, where $v$ is the velocity in the shearing direction, 
$\gamma$ is the plastic shear strain,  and $\sigma$ is the shear stress. The model describes a specimen situated on the  $xy$-plane that shears in the $y$-direction. 
The simplifying assumption here is that the (plastic) strain $\gamma$ and the strain rate $\gamma_t$ solely characterize the stress yield relation
$ \sigma = \sigma(\gamma,\gamma_t)$.  The purpose of our study is to analyze a type of instability emerging out of the competition between 
strain softening and strain-rate hardening. Our study is not limited to capturing the shear bands but also seeks to explain this viscoplastic instability mechanism
and the emergence of organized structures out of this competition.
Early mathematical treatments of the initial value problem can be found in \cite{tzavaras_plastic_1986,tzavaras_strain_1991}.

A simple choice for $\sigma(\gamma,\gamma_t)$  is given by 
\begin{equation} \label{eq:constitution1}
 \sigma = \sigma(\gamma,\gamma_t)=\varphi(\gamma)\gamma_t^n
\end{equation}
(see \cite{molinari_analytical_1987, tzavaras_strain_1991}) 
where $\varphi'(\gamma)<0$ and $n>0$ is the rate sensitivity parameter which is typically very small \cite{shawki_shear_1989}. 
When  $n=0$, then $\sigma=\varphi(\gamma)$ and the condition of strain-softening $\varphi'(\gamma)<0$ implies that
the system \eqref{eq:system0} is elliptic in the $t$-direction. Then the initial value problem exhibits
 Hadamard instability, that is the linearized problem exhibits catastrophic growth in the high frequency oscillatory modes.  
 But when  $n > 0$, the  effect of viscosity  competes against this instability.

The model \eqref{eq:system0}-\eqref{eq:constitution1} admits the uniform shearing solutions, 
\begin{align} \label{eq:uss}
 \begin{aligned}
  v_s(x) &= x, \\
  u_s(t) &= \partial_x v_s(x,t) = 1,\\
  \gamma_s(t) &= t + \gamma_0, \quad \text{$\gamma_0$ a constant}\\
  \sigma_s(t) &= \varphi(t+\gamma_0) \, , 
 \end{aligned}
\end{align}
valid even for the value $n=0$. 
The question arises which of the two effects, the instability induced by strain softening or the stabilizing effect of strain-rate sensitivity, 
wins the competition, and whether a given initial nonuniformity can lead 
to unstable modes that grow faster than the uniform shear \eqref{eq:uss}. 
This question has been considered for a power law model,
\begin{equation} \label{eq:constitution2}
 \sigma = \varphi(\gamma)\gamma_t^n = \gamma^{-m}\gamma_t^n, \quad m,n>0 \, ,
\end{equation}
in  \cite{HN77, molinari_analytical_1987, tzavaras_strain_1991}.
The associated system of partial differential equations becomes
\begin{equation} \label{eq:system}
 \begin{aligned}
   v_t & =\big(\gamma^{-m} v_x^n\big)_x \\
   \gamma_t&=v_x, 
 \end{aligned}
\end{equation}
where $n > 0$ and $0 < m < 1$. 
The stability of the uniform shearing solutions for this system, under velocity boundary conditions, is considered
in \cite{tzavaras_plastic_1986, tzavaras_strain_1991} via methods of nonlinear analysis, and in 
\cite{clifton_critical_1984, fressengeas_instability_1987, molinari_analytical_1987, tzavaras_nonlinear_1992} 
via linearized analysis techniques. 
In the region $n>m$, the uniform shear is both linearly and nonlinearly stable.
By contrast, linearized instability appears in the region $n < m$. Throughout the rest of this work, we are interested in the instability regime; hence, we focus on the parameter  range $n < m \le 1$, $n > 0$ small, and study
the behavior in the nonlinear regime.


\section{Focusing Self-similar Solutions} \label{sec:fss}
In this section, we study a family of focusing self-similar solutions for \eqref{eq:system} in the parameter regime $0 < n < m \le 1$.
Similar techniques were introduced in \cite{katsaounis_emergence_2014}, where the authors studied a thermally softening model,
and in the companion paper \cite{KLT_2016} providing a numerical construction valid in the special case $m=1$.

We begin by investigating the scale invariance properties of the system \eqref{eq:system}. For a given $(\gamma,v)$, we define $(\gamma_\rho$, $v_\rho)$ and the new independent variables $y$ and $s$ such that
\begin{equation} \label{eq:scale_inv}
\begin{aligned}
 &\gamma_\rho(t,x) = \rho^a \gamma(\rho^{-1}t, \rho^\lambda x), \quad
 v_\rho(t,x) = \rho^b v(\rho^{-1}t, \rho^\lambda x),\\
 &s=\rho^{-1}t, \quad
 y=\rho^ \lambda x.
\end{aligned}
\end{equation}
Due to the choice $\lambda>0$, this transformation makes the profile narrower and higher for $\rho$ large. This is of course in accordance with our goal, constructing solutions that localize the initial profile. A simple calculation shows that \eqref{eq:system} is invariant under \eqref{eq:scale_inv} if the exponents $a$ and $b$ are selected as 
\begin{equation} \label{eq:s_inv}
 a^{\lambda,m,n}= \frac{2-n}{1+m-n} + \frac{2}{1+m-n}\lambda, \quad b^{\lambda,m,n}= \frac{1-m}{1+m-n} + \frac{1-m+n}{1+m-n} \lambda.
\end{equation}

This motivates us to consider a family of self-similar solutions of the focusing type
\begin{equation} \label{eq:ss}
\begin{aligned}
 \bar{ \gamma }(t,x) &= (t+1)^a\bG((t+1)^ \lambda x),\\
 \bar{v}(t,x)&=  (t+1)^b\bV((t+1)^ \lambda x)
\end{aligned}
\end{equation}
with $\xi=(t+1)^ \lambda x$ is the similarity variable. Substitution into \eqref{eq:system} gives a system of ordinary differential equations
\begin{equation} \label{eq:sseqns}
 \begin{split}
 a^{\lambda,m,n}\bG + \lambda \xi \bG_\xi &= \bV_\xi,\\
 b^{\lambda,m,n}\bV + \lambda \xi \bV_\xi &= \big(\bG^{-m}\bV_\xi^n\big)_\xi.
 \end{split}
\end{equation}
where $a$, $b$ are given by \eqref{eq:s_inv}. 
We supplement the above equations with suitable initial conditions
\begin{align}
\label{ssdata}
 \bG(0) = \bG_0  > 0 \, , \quad \bV_{\xi} (0)  = \bU(0) = \bU_0  > 0 \, , 
 \end{align}
where $\bG_0$ and $\bU_0$ are positive parameters. As the problem is singular it is not a-priori clear how many conditions are needed;
the choice \eqref{ssdata} is justified by the analysis of the singularity at $\xi = 0$ presented below.
Given a  smooth solution of \eqref{eq:sseqns}, \eqref{ssdata} for parameters $\lambda$, $m$ and $n$, it will generate the profile  of a solution to \eqref{eq:system} that localizes 
at the focusing rate $\lambda$. Note that such a solution will be generated by an initial profile
$(\bG(x), \bV(x))$ at $t = 0$ and $\bG_0$, $\bU_0$ can be thought as measuring the size of the initial nonuniformity.

The system \eqref{eq:sseqns} is non-autonomous and singular at $\xi=0$. 
Existence of smooth solutions for such singular systems is not guaranteed by general theories and is effected via a case-by-case analysis. 
In the present case, it is possible to de-singularize \eqref{eq:sseqns}, turning it into an autonomous system of three differential equations. 
In a second step the problem is turned into the construction of a heteroclinic orbit for an equivalent system. 
The existence of the heteroclinic orbit is achieved in Section \ref{sec:proof} by employing geometric singular perturbation theory.

We reduce the problem on the right half plane $\xi \ge 0$.  Since the system is invariant under the transformation $\xi \rightarrow -\xi$, $\bG \rightarrow \bG$ and $\bV \rightarrow -\bV$, if we construct a smooth solution in the right-half-plane, then the even extension of $\bG$ and the odd extension of $\bV$ will give rise to a solution on the entire line. The conditions
\begin{equation} \label{eq:continuity}
 \frac{d}{d\xi}\bG(0)=\bV(0)=0
\end{equation}
are imposed to ensure $\bG$ and $\bV$ are smooth at $\xi=0$.

The system \eqref{eq:sseqns} has its own scale invariance. For a given $\big(\bG(\xi), \bV(\xi)\big)$, we define
\begin{equation}
 \bG_A(\xi) = A^\alpha \bG(A\xi), \quad \bV_A(\xi) = A^\beta \bG(A\xi).
\end{equation}
The exponents $\alpha$ and $\beta$ that make \eqref{eq:sseqns} invariant are
\begin{equation}
\alpha = \frac{-2}{1+m-n}, \quad \beta = - \frac{1-m+n}{1+m-n}. \label{eq:alpha_beta}
\end{equation}

Motivated by the previous observation, we introduce a change of variables
\begin{align} \label{eq:tvars}
 \begin{aligned}
 \bG(\xi) &= \xi^\alpha \tg(\log \xi), \quad \bV(\xi) = \xi^\beta \tv(\log \xi), \\
 \bU(\xi) &= \bV_\xi(\xi) = \xi^\alpha \tu(\log\xi), \quad \bS(\xi) =\bG^{-m}\bU^n= \xi^{-\alpha(m-n)}\ts(\log\xi),
 \end{aligned}
\end{align}
where $\eta = \log \xi$, $\eta \in (-\infty,+\infty)$ is the new independent variable.
Substitution into \eqref{eq:sseqns} gives the system for the residual variables $(\tg,\tv,\tu,\ts)$
\begin{equation}
 \begin{split}
 \frac{2-n}{1+m-n} \tg + \lambda \tg_\eta &= - \frac{1-m+n}{1+m-n}\tv + \tv_\eta,\\
 \frac{1-m}{1+m-n} \tv + \lambda \tv_\eta &= \frac{2(m-n)}{1+m-n}\ts + \ts_\eta,\\
 \ts &=\tg^{-m}(\beta \tv + \tv_\eta)^n. 
 \end{split}
\end{equation}
The third equation can be written as
$$\big(\ts \tg^m\big)^\frac{1}{n} = - \frac{1-m+n}{1+m-n}\tv + \tv_\eta\;(=\tu).$$
when $n>0$.
After rearrangement, we arrive at an autonomous system of three first-order equations
\begin{equation} \label{eq:auto0}
 \begin{split}
  \lambda \tg_\eta &= -\frac{2-n}{1+m-n}\tg + (\ts \tg^m)^{ \frac{1}{n} }, \\
  \tv_\eta &= \frac{1-m+n}{1+m-n}\tv + (\ts \tg^m)^{ \frac{1}{n} }, \\
  \ts_\eta &= - \frac{2(m-n)}{1+m-n}\ts + b^{ \lambda,m,n}\tv + \lambda(\ts \tg^m)^{ \frac{1}{n} }.
 \end{split}
\end{equation}


Further inspection shows that the variables cannot simultaneously equilibrate. For example, if $\tg \rightarrow \tg_\infty$ as $\eta \rightarrow \infty$, then $(\ts \tg^m)^{ \frac{1}{n} } \rightarrow \frac{2-n}{1+m-n}\tg_\infty$ and this makes $\tv$ diverge. 
This raises analytical difficulties addressed and we avoid them by introducing a second non-linear transformation so that the variables simultaneously equilibrate. 
One observation is that if $f \sim \xi^\rho$ as $\xi \rightarrow \infty$ (resp. as $\xi \rightarrow 0$), then $\partial_\eta(\log f) = \frac{\xi \partial_\xi f}{f} \rightarrow \rho$ 
 as $\xi \rightarrow \infty$ (resp. as $\xi \rightarrow 0$). 
Thus, if we  identify two quantities $f$ and $g$ that share the same asymptotic leading order as $\xi \rightarrow \infty$ (resp. as $\xi \rightarrow 0$), then $\partial_\eta(\log f) - \partial_\eta(\log g) = \partial_\eta(\log \frac{f}{g}) \rightarrow 0$,  in other words $\log \frac{f}{g}$ equilibrates as $\xi \rightarrow \infty$ (resp. as $\xi \rightarrow 0$). 
Using \eqref{eq:sseqns}, heuristic calculations can be carried out to find which three pairs of quantities are expected to share the same asymptotic leading order. 
This suggests to introduce the variables
\begin{equation}  
\label{eq:ratios}
 p= \frac{\xi^2\bG}{\bS}=\frac{ \tg}{ \ts}, \quad
 q= b^{\lambda,m,n}\frac{\xi\bV}{\bS}=b^{\lambda,m,n}\frac{ \tv}{\ts}, \quad
 r=\frac{\bU}{\bG} = \frac{\tu}{\tg} = \frac{ \big(\ts \tg^m\big)^\frac{1}{n} }{ \gamma }. 
\end{equation}
Substitution into \eqref{eq:auto0} leads to the equivalent system
{\small
\begin{align} 
 \dot{p} &=p\Big( ~~~~~~\frac{1}{ \lambda }\big(r - \frac{2-n}{1+m-n}\big) - \frac{1-m+n}{1+m-n} &+&1-q- \lambda p r\Big), \nonumber \\
 \dot{q} &=q\Big(                                                                          &+&1-q- \lambda p r\Big) + b^{\lambda,m,n}pr,\tag*{$(P)$\textsuperscript{$\lambda,m,n$}}\label{eq:pqrsystem}\\
 n\dot{r}&=r\Big( \frac{m-n}{ \lambda }\big(r - \frac{2-n}{1+m-n}\big) + \frac{1-m+n}{1+m-n} &-&1+q+ \lambda p r\Big), \quad   \nonumber
\end{align}
}
{where $\dot{(\cdot)} = \frac{d}{d{\eta}}(\cdot)$}.
The remainder of the work is organized as follows:
\begin{itemize}
\item[(a)] In section \ref{sec:dyn}, we show that the problem of existence of  a smooth profile $(\bG(\xi), \bV(\xi)$ satisfying  \eqref{eq:sseqns} and \eqref{ssdata}
can be reformulated to the construction of a suitable heteroclinic orbit for the system \eqref{eq:pqrsystem}.
\item[(b)] In section \ref{sec:proof}, we use geometric theory of singular perturbations (see appendix \ref{sec:geom}) to construct the heteroclinic orbit.
\end{itemize}

\begin{remark}
It is instructive to examine the relation between the uniform shearing solutions \eqref{intro-uss} (where for simlicity $\gamma_0 = 1$) 
and the focusing self-similar solutions \eqref{eq:ss}. If we select $\lambda = \tfrac{m-1}{2}$, then \eqref{eq:s_inv} implies $a = 1$,  $b = -\lambda$.
The function $\bV_s (\xi) = \xi$, $\bG_s  (\xi ) = 1$ solves \eqref{eq:sseqns} and the associated function emerging from \eqref{eq:ss},
$$
\bar \gamma (t,x) = (t+1) = \gamma_s (t) \, , \quad \bar v(t,x) = (t+1)^{-\lambda} \big ( (t + 1)^\lambda x \big ) = v_s (x)
$$
is precisely the uniform shearing solution. It corresponds however to a choice of parameter $\lambda < 0$ and it is not
of the focusing type.
\end{remark}

\section{Analysis of the Dynamical System} \label{sec:dyn}

In this section, we carry out the following steps: 
\begin{enumerate}
 \item[(i)] In section \ref{sec:equi}, we find the equilibria $M_i^{\lambda,m,n}$, $i=0,1,2,3$, of \eqref{eq:pqrsystem} and compute their local eigenstructure.
 \item[(ii)] In section \ref{sec:char}, we single out a heteroclinic orbit having the expected behavior as $\eta \rightarrow \pm \infty$. 
\end{enumerate}

\subsection{Equilibria and linear stability} \label{sec:equi}
The system \eqref{eq:pqrsystem} has four equilibrium points in the first octant of the phase space. The four equilibrium points are
\begin{align*}
 M_0^{ \lambda,m,n} 
     = \begin{pmatrix} 
        0 \\ 0 \\ a
       \end{pmatrix}, \quad
 M_1^{ \lambda,m,n} 
     = \begin{pmatrix} 
         0 \\  1  \\ c 
       \end{pmatrix}, \quad
 M_2^{ \lambda,m,n} 
     = \begin{pmatrix} 
        0 \\ 1 \\ 0
       \end{pmatrix}, \quad
 M_3^{ \lambda,m,n} 
     = \begin{pmatrix} 
        0 \\ 0 \\ 0
       \end{pmatrix},    
\end{align*}
where $a$ is the exponent in  \eqref{eq:s_inv} and 
\begin{equation}
\label{defc}
c =\frac{2-n}{1+m-n} - \frac{1-m+n}{(1+m-n)(m-n)} \lambda \, .
\end{equation}
Further, we define the constants:
\begin{align*}
 d &=\frac{1-m}{1+m-n} + \frac{2}{1+m-n} \lambda , \quad \quad
 e =\frac{1-m}{1+m-n} - \frac{2(m-n)}{1+m-n} \lambda,\\
 f &=\frac{1-m}{1+m-n} - \frac{1-m+n}{(1+m-n)(m-n)} \lambda, \quad \quad
 g =\frac{2-n}{1+m-n} + \frac{1-m+n}{1+m-n} \lambda,\\
 h &=\frac{2-n}{1+m-n} - \frac{2(m-n)}{1+m-n} \lambda,
\intertext{and}
A&=\left(\frac{m-n}{n}\right) \frac{a}{ \lambda },\quad 
 B=\left(\frac{m-n}{n}\right) \frac{c}{ \lambda },\quad 
 C=\left(\frac{m-n}{1-m+n}\right) B. 
\end{align*}
that are used to express the eigenvalues of the associated linearized problems.
Note that the constants $A$, $B$, and $C$ diverge as $n \rightarrow 0$. 

The analysis below applies to the case $n>0$, where we have eigenvalues $\mu_{i3}$, $i=0,1,2,3$ that are of $O(\frac{1}{n})$.
It is clear however that when $n=0$, the last equation of \eqref{eq:pqrsystem} becomes algebraic equation and the orbits are restricted on  the pieces of surface the equation specifies. There is no chance to escape the surface, i.e., the asymptotic structure around the equilibrium point is essentially of two dimensions. 

\eqref{fig:equilibria0} depicts the four equilibrium points in the first octant Arrows indicate the stable and unstable subspaces of each equilibrium point. When $\lambda > \frac{(2-n)(m-n)}{1-m+n}$, $c$ becomes negative and $M_1$ lies below the plane $r=0$. We will only be interested in the cases where $M_1$ lies above the plane $r=0$ and only in the region $r>0$. Thus, the case $c\le0$ is excluded from our study and accordingly $\lambda$ has the upper bound
\begin{equation} \label{eq:upperbdd}
 0< \lambda < \frac{(2-n)(m-n)}{1-m+n}.
\end{equation}
%
Next, when $ \lambda = \frac{2-n}{2(m-n)}$, $M_3$ is replaced by a line of equilibria which is the $p$-axis. Because it takes place on the plane $r=0$, this case is not treated separately. 

Now, we present the linear stability analysis of the equilibria. We denote the three eigenvalues of the linearization at each $M_i$ by $\mu_{ij}$ and the associated eigenvectors by $\vec{X}_{ij}$, $j=1,2,3$. The eigenvalues and eigenvectors as well as the equilibrium points are functions of $ \lambda$, $m$, and $n$. We omit this dependency for better readability but we will use superscripts when we need a clear distinction.
\begin{figure}
  \centering
  \psfrag{x0}{\scriptsize $M_0$}
  \psfrag{x1}{\scriptsize $M_1$}
  \psfrag{x2}{\scriptsize $M_2$}
  \psfrag{x3}{\scriptsize $M_3$}
  \psfrag{p}{\scriptsize $p$}
  \psfrag{q}{\scriptsize~~~$q$}
  \psfrag{r}{\scriptsize$r$}
  \psfrag{q*}{}
  \psfrag{r*1}{}
  \psfrag{r*2}{}
  \includegraphics[width=6cm]{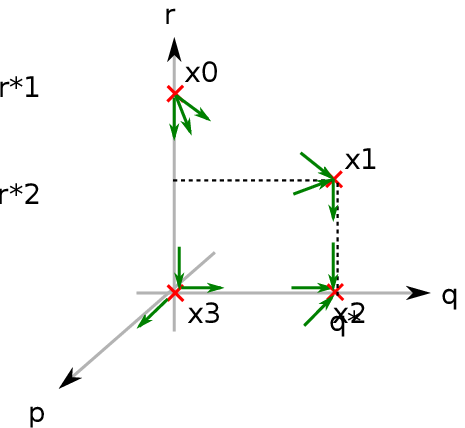} \label{fig:equilibria0}
  \caption{Equilibria of the $(p,q,r)$-system and the associated linearized vector fields for $\lambda$ satisfying \eqref{eq:upperbdd} and for $\mu_{31}>0$. }
\end{figure}

$\bullet$ $M_0$ \emph{is an unstable node}: all eigenvalues are real and positive; the first and the last eigenvectors lie on the $(q,r)$-plane, 
the first one is pointing right and down on the $(q,r)$-plane and the last one is pointing towards the origin on the $r$-axis. 
The second eigenvector is going off the $(q,r)$-plane. 
\begin{align*}
  \vec{X}_{01} &= \Bigg(0 , 1 , - \frac{\lambda}{m-n} \bigg(\frac{1}{1- A^{-1}} \bigg)\Bigg), \quad \mu_{01}=1,\\
  \vec{X}_{02} &= \Bigg( 1 , ab , - \frac{\lambda ad}{m-n} \bigg(\frac{1}{1- 2A^{-1}} \bigg)\Bigg), \quad \mu_{02}=2,\\
  \vec{X}_{03} &= (0 , 0 , 1), \quad \mu_{03}= A.
\end{align*}
We require $n$ to be sufficiently small so that $A^{-1}$ is small, $1-A^{-1}>0$ and $1-2A^{-1}>0$.

$\bullet$ $M_1$ \emph{is a saddle }: all eigenvalues are real, two of them are negative and one is positive. The unstable eigenspace,
$$  \vec{X}_{13} = (0, 0, 1), \quad \mu_{13}= B > 0 \, , $$
points towards the equilibrium $M_2$ and lies on the $(q,r)$-plane. 
There are two negative eigenvalues $\mu_{11}=-1$ and $\mu_{12}=- \frac{1-m+n}{m-n}$ and  the associated stable eigenspace  is two dimensional. 
Note that  $\mu_{12} = \mu_{11}$ when $m-n= \frac{1}{2}$. We specify the subspaces below:
\begin{enumerate}
 \item If $e=\frac{1-m}{1+m-n} - \frac{2(m-n)}{1+m-n}\lambda =0$, 
 \begin{align*} 
  \vec{X}_{11} &= \Bigg( 0, 1, - \frac{\lambda}{m-n} \bigg(\frac{1}{1+ B^{-1}} \bigg)\Bigg), \quad
  \vec{X}_{12} = 
\bigg(  1\;,\;0\;,\;  -\frac{\lambda}{m-n} \frac{\lambda c}{1+C^{-1}}\bigg).
  \end{align*}
  \item If $e\ne0$ and $m-n = \frac{1}{2}$, then $\mu_{11}=\mu_{12}=-1$ but its geometric multiplicity is $1$. It has the eigenvector $\vec{X}_{11}$, and the generalized eigenvector $\vec{X}_{12}'$ such that
  \begin{align*} 
  \vec{X}_{11} &= \Bigg( 0, 1, - \frac{\lambda}{m-n} \bigg(\frac{1}{1+ B^{-1}} \bigg)\Bigg), \quad 
  \vec{X}_{12}'= \Bigg(1 \;,\; -\lambda c - 2\lambda n \frac{e}{1+B^{-1}} \;,\; 0\Bigg).
  \end{align*}
  \item If $e\ne0$ and $m-n \ne \frac{1}{2}$,
  \begin{align*} 
  \vec{X}_{11} &= \Bigg( 0, 1, - \frac{\lambda}{m-n} \bigg(\frac{1}{1+ B^{-1}} \bigg)\Bigg), \\
  \vec{X}_{12} &= \Bigg(  \frac{-\frac{1-m+n}{m-n}+1}{ec}\;,\;1\;,\; -\frac{\lambda}{m-n}\frac{\big(-\frac{1-m+n}{m-n}+1\big)\lambda + e}{e(1+C^{-1})}\Bigg).
  \end{align*}
\end{enumerate}
The first eigenvector is in the $(q,r)$-plane and points up towards $M_1$ from the left. 
The second eigenvector or generalized eigenvector points towards $M_1$ coming from a direction off the $(q,r)$-plane.

$\bullet$ $M_2$ \emph{is a stable node}: all eigenvalues are real and negative. The eigenvectors are the coordinate basis vectors.
\begin{align*}
  \vec{X}_{21} &= (1,0,0), \quad \mu_{21}=- \frac{g}{ \lambda},\\
  \vec{X}_{22} &= (0,1,0), \quad \mu_{22}=-1,\\
  \vec{X}_{23} &= (0,0,1), \quad \mu_{23}=-B.
\end{align*}

$\bullet$ $M_3$ \emph{is at the origin and is a saddle}: all eigenvalues are real, the second eigenvalue is positive and the last eigenvalue is negative. 
The first eigenvalue changes sign at $\bar\lambda=\frac{2-n}{2(m-n)}$; it is negative if $\lambda<\frac{2-n}{2(m-n)}$ and positive if $\lambda>\frac{2-n}{2(m-n)}$. The eigenvectors are the coordinate basis vectors.
\begin{align*}
  \vec{X}_{31} &= (1,0,0), \quad \mu_{31}=- \frac{h}{ \lambda},\\
  \vec{X}_{32} &= (0,1,0), \quad \mu_{32}=1,\\
  \vec{X}_{33} &= (0,0,1), \quad \mu_{33}=-A.
\end{align*}

\subsection{Characterization of a suitable heteroclinic orbit} \label{sec:char}
Since there are four equilibria, the unstable or stable manifolds of each equilibrium point may conceivably intersect in various ways
and produce multiple heteroclinic connections. 
Our goal is to identify a  heteroclinic connection that provides a meaningful (from the perspective of mechanics) self-similar solution. 
Its characterization comes from analyzing the expected behavior as $\eta \rightarrow \pm\infty$. 

\subsubsection{Behavior at $+\infty$}
The profile of the targeted solution should correspond mechanical loading in the  shearing direction, and the resulting strain should be an
increasing function of time at any spatial point. For example, the strain of the uniform shearing solutions \eqref{eq:uss} grows linearly in time. 
We expect that the strain that is physically desirable for our solution should grow at a polynomial order.  Note, that if  $\gamma \sim t^\rho$, the quantity $\frac{t {\gamma}_t}{{\gamma}} \rightarrow \rho$ as $t \rightarrow \infty$. The quantity $r$ is
\begin{align*}
 r=\frac{\tu}{\tg} = \frac{\xi^{-\alpha} \bU(\xi)}{\xi^{-\alpha} \bG(\xi)} = \frac{\bU(\xi)}{\bG(\xi)}
 =\frac{t^{-(b+ \lambda)}\bar{v}_x}{t^{-a}\bar{\gamma}} = \frac{t \bar{\gamma}_t}{\bar{\gamma}}
\end{align*}
and thus we expect $r$ as $\eta \rightarrow \infty$ to tend to a (strictly) positive value. Among the equilibria $M_i$, $i=0,1,2,3$, $M_0$ is an unstable node, 
so we find $M_1$ as the only possibility that can provide the desired behavior. Thus, we select $M_1$ as the target of the desired heteroclinic as $\eta \to \infty$.

\subsubsection{Behavior at $-\infty$}\label{behminusinf}
The boundary conditions \eqref{eq:continuity} provide the desired behavior as $\eta \to -\infty$. 

\begin{proposition} \label{lem:alpha} Let $\big(\bG(\xi),\bV(\xi)\big)$ be a smooth solution of \eqref{eq:sseqns} 
with boundary conditions \eqref{eq:continuity}, and let $\big(p(\eta),q(\eta),r(\eta)\big)$ be the associated orbit obtained via the
transformations \eqref{eq:tvars} and \eqref{eq:ratios}. Then
\begin{align}
   e^{-2\eta}\left[\begin{pmatrix}
   p(\eta) \\ q(\eta) \\ r(\eta)
  \end{pmatrix}
  - M_0
  \right] \rightarrow \kappa\vec{X}_{02}, \quad \text{as $\eta \rightarrow -\infty$} \label{eq:estim}
\end{align} 
for the constant $\kappa = {\bar\Gamma(0)^{1+m-n}a^{-n}} >0$.
\end{proposition}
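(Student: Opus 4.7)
The plan is to exploit smoothness of $\bG, \bV$ at $\xi = 0$ together with the boundary conditions \eqref{eq:continuity} and the equations \eqref{eq:sseqns} to produce Taylor expansions of the profiles near the origin, then lift these through the transformations \eqref{eq:tvars}--\eqref{eq:ratios} and read off the asymptotic behavior of $(p,q,r)$ after setting $\xi = e^\eta$.

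First, the symmetry $\xi \mapsto -\xi$, $\bV \mapsto -\bV$ of \eqref{eq:sseqns} together with $\bG_\xi(0) = \bV(0) = 0$ force the parity expansions
\begin{equation*}
\bG(\xi) = \bG_0 + g_2 \xi^2 + O(\xi^4), \qquad \bV(\xi) = \bU_0\, \xi + v_3 \xi^3 + O(\xi^5),
\end{equation*}
so $\bU = \bU_0 + 3 v_3 \xi^2 + O(\xi^4)$ and $\bS = \bG_0^{-m}\bU_0^n + O(\xi^2)$. Evaluating $\eqref{eq:sseqns}_1$ at $\xi = 0$ recovers $\bU_0 = a\,\bG_0$, the constraint noted after \eqref{intro-ssdata}. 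Matching the $O(\xi^2)$ coefficient in $\eqref{eq:sseqns}_1$ gives $v_3 = \tfrac{1}{3}(a + 2\lambda)\,g_2$, and matching the $O(\xi)$ coefficient in $\eqref{eq:sseqns}_2$, after differentiating $\bG^{-m}\bU^n$ and using the identity $b + \lambda = d$, uniquely determines
\begin{equation*}
g_2 \;=\; \frac{d\,\bG_0^{\,m+2-n}\,a^{\,2-n}}{2\bigl[\,n(a+2\lambda) - m a\,\bigr]}.
\end{equation*}
The denominator is nonzero precisely under the smallness condition $1 - 2 A^{-1} > 0$ already invoked in the analysis of $M_0$ in Section~\ref{sec:equi}.

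Inserting these expansions into \eqref{eq:ratios} and simplifying with $\bU_0 = a\,\bG_0$ yields
\begin{align*}
p(\xi) &\;=\; a^{-n}\bG_0^{\,1+m-n}\,\xi^2 + O(\xi^4), \\
q(\xi) &\;=\; a\,b\,a^{-n}\bG_0^{\,1+m-n}\,\xi^2 + O(\xi^3), \\
r(\xi) - a &\;=\; \frac{3 v_3 - a g_2}{\bG_0}\,\xi^2 + O(\xi^4) \;=\; \frac{2\lambda g_2}{\bG_0}\,\xi^2 + O(\xi^4).
\end{align*}
With $\kappa := a^{-n}\bG_0^{\,1+m-n}$, the first two components directly match $\kappa$ and $\kappa\cdot ab$, i.e.\ the first two entries of $\vec{X}_{02}$. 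For the third entry, the identity
\begin{equation*}
(m-n)(1 - 2 A^{-1}) \;=\; -\,\frac{n(a+2\lambda) - m a}{a},
\end{equation*}
which follows directly from $A = (m-n)a/(n\lambda)$, converts the coefficient $2\lambda g_2/\bG_0$ into $-\kappa\,\lambda a d / \bigl[(m-n)(1 - 2 A^{-1})\bigr]$, exactly the third entry of $\vec{X}_{02}$. Substituting $\xi^2 = e^{2\eta}$ and letting $\eta \to -\infty$ then yields \eqref{eq:estim}.

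The argument is essentially a careful bookkeeping exercise in power-series matching; the only delicate point is the non-degeneracy ensuring $g_2$ is well-defined, namely $n(a+2\lambda) \neq m a$, which is equivalent to the condition $1 - 2 A^{-1} \neq 0$ already used in identifying the eigenstructure at $M_0$. A brief sanity check confirms the signs: since $n$ is small and $m > n$, we have $n(a+2\lambda) - m a < 0$, so combined with $d, \bG_0, a > 0$ this gives $g_2 < 0$, consistent with $\bG$ having a local maximum at $\xi = 0$ and with the third entry of $\vec{X}_{02}$ being negative.
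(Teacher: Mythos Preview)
Your proof is correct and follows essentially the same approach as the paper's own proof: both compute Taylor expansions of the profiles $\bG,\bV,\bU,\bS$ near $\xi=0$ using the boundary conditions and the equations, then push these through the transformations \eqref{eq:tvars}--\eqref{eq:ratios} to read off the $\xi^2$ (i.e.\ $e^{2\eta}$) coefficients of $(p,q,r)-M_0$. Your use of the parity structure to streamline the expansions and your explicit closed form for $g_2$ are convenient organizational choices, but the argument is the same Taylor-matching computation carried out in the paper.
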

\begin{proof}
Let us compute the Taylor expansions of variables $p(\log\xi)$, $q(\log\xi)$ and $r(\log\xi)$ near $\xi=0$. 
The values of the variables and their derivatives evaluated at $\xi=0$ can be inferred by \eqref{eq:continuity} and by differentiating the system \eqref{eq:sseqns} repeatedly. 
A straightforward but cumbersome calculation yields
\begin{align*}
 \bar{V}(0)&=0,\quad
 \bar{V}_\xi(0) = \bar{U}(0) = \bar{ \Gamma }(0) a, \quad 
 \bar{ \Sigma }(0) = \frac{ \bar{U}^n }{ \bar{ \Gamma }^m }(0) = \bar{ \Gamma }(0)^{-(m-n)} a^n, \\
 \bar{\Gamma}_{\xi}(0) &= \bar{\Sigma}_{\xi}(0) = \bar{U}_\xi(0) = 0, \\
 \bar{\Gamma}_{\xi\xi}(0) &=-\big(\bG(0)^{2+m-n}a^{-n}\big) \frac{ \lambda ad }{m-n}  \Bigg(\frac{1}{ 1 -  2 A^{-1}}\Bigg)  \frac{1}{ \lambda }, \\
 \bar{U}_{\xi\xi}(0) &=-\big(\bG(0)^{2+m-n}a^{-n}\big) \frac{ \lambda ad}{m-n}  \Bigg(\frac{a+2 \lambda}{ 1 -  2  A^{-1}}\Bigg)  \frac{1}{ \lambda } ,
\end{align*}
and the leading orders of the Taylor expansion of the field variables near $\xi=0$ are 
\begin{align*}
 \bV(\xi) &= \bU(0)\xi + o(\xi^2),\\
 \bG(\xi) &= \bG(0) + \frac{\xi^2}{2} \bG_{\xi\xi}(0) + o(\xi^2), \\
 \bS(\xi) &= \bS(0) + \frac{\xi^2}{2} \bS_{\xi\xi}(0) + o(\xi^2), \\
 \bU(\xi) &= \bU(0) + \frac{\xi^2}{2} \bU_{\xi\xi}(0) + o(\xi^2).
\end{align*}

Using \eqref{eq:tvars} and \eqref{eq:alpha_beta}, we calculate the Taylor expansion of $p(\log\xi)$ near $\xi=0$, 
\begin{align*}
 {p}(\log\xi) &= 
  \frac{ \tilde \gamma}{ \tilde \sigma}  = 
   \frac{ \xi^{ \frac{2}{1 + m -n} } \, \bar\Gamma }{ \xi^{  - (m-n) \frac{2}{1 + m -n} } \, \bar\Sigma }   
  = \xi^2 \frac{ \bar\Gamma }{ \bar\Sigma }
  \\
 &= \xi^2 \frac{ \bar\Gamma(0) }{ \bar\Sigma(0) } + o(\xi^2) = \bG(0)^{1+m-n}a^{-n}\xi^2 + o(\xi^2). 
\end{align*}
{Also, the Taylor expansion of $q(\log\xi)$ near $\xi=0$ is given by}
\begin{align*}
 {q}(\log\xi) &= b\frac{ \tilde v}{ \tilde \sigma} 
 =      \frac{  b \,  \xi^{\frac{1-m+n}{1+m-n}} \bar\Gamma}{  \xi^{  - (m-n) \frac{2}{1 + m -n} } \, \bar\Sigma  }   
 = b\xi \frac{ \bar V }{ \bar\Sigma }
 \\
 &= b\bigg[\xi \frac{ \bar V(0) }{ \bar\Sigma(0) } + \xi^2  \bigg(\frac{\bar{ V }_\xi(0)}{\bar{ \Sigma }(0)} - \frac{ \bar V (0) \bar{ \Sigma }_\xi(0)}{\bar{ \Sigma^2 }(0)}\bigg) + o(\xi^2)\bigg]\\
 &=\big(\bG(0)^{1+m-n}a^{-n}\big)ab\xi^2 + o(\xi^2). 
\end{align*}
{Finally, $r(\log\xi)=a$ at $\xi=0$ and the Taylor expansion of $r(\log\xi)-a$ near \\$\xi=0$ yields, }
\begin{align*}
 {r}(\log\xi) -a &= \frac{ \tilde u}{ \tilde \gamma} - a
 =    \frac{ \xi^{ \frac{2}{1 + m -n} } \, \bar U }{ \xi^{ \frac{2}{1 + m -n} } \, \bar\Gamma } - a   
 = \frac{ \bar U }{ \bar\Gamma }(\xi) - \frac{ \bar{U}(0)}{ \bar{ \Gamma }(0)}\\
 &= \xi \frac{\bar{ U }(0)}{\bar{ \Gamma }(0)} \bigg(\frac{\bar{ U }_\xi(0)}{\bar{ U }(0)} - \frac{\bar{ \Gamma }_\xi(0)}{\bar{ \Gamma }(0)}\bigg) \\
 &+ \frac{1}{2}\xi^2\bigg[ \frac{\bar{U}_{\xi\xi}(0)}{ \bar{ \Gamma }(0)} - 2 \frac{ \bar{U}_\xi(0)\bar \Gamma_\xi(0)}{\bar \Gamma^2(0)} + \bar{U}(0) \bigg(- \frac{ \bar\Gamma_{\xi\xi}(0) }{\bar \Gamma^2(0)} + 2 \frac{\big(\bar \Gamma_\xi(0)\big)^2}{ \bar \Gamma^3(0) }\bigg) \bigg] + o(\xi^2)\\
 &=\big(\bG(0)^{1+m-n}a^{-n}\big)\frac{ -\lambda ad }{m-n}  \Bigg(\frac{1}{ 1 -  2  A^{-1}}\Bigg)   \xi^2 + o(\xi^2).
\end{align*}
From $\log\xi = \eta$ and the eigenvector of the unstable node $M_0$, we conclude that
\begin{align} \label{eq:asym_alpha}
   &e^{-2\eta}\left[\begin{pmatrix}
   p(\eta) \\ q(\eta) \\ r(\eta)
  \end{pmatrix}
  - M_0\right]
  \rightarrow {\bar\Gamma(0)^{1+m-n}a^{-n}}\vec{X}_{02}, \quad \text{as $\eta \rightarrow -\infty$.}
\end{align}
\end{proof}

\subsubsection{Selection of the targeted heteroclinic orbit}\label{sec:summarize} 
The asymptotic behavior as $\eta \rightarrow \pm\infty$ suggests to look for a heteroclinic orbit joining $M_0$ to $M_1$ that emanates in the direction of $\vec{X}_{02}$. 
Recall that $M_0$ is an unstable node and that $M_1$ has two dimensions of stable eigenspace. We conjecture from that there is a surface $G \subset W^u(M_0) \cap W^s(M_1)$, the intersection of the unstable manifold of $M_0^{ \lambda, m,n}$ and the stable manifold of the equilibrium $M_1^{ \lambda, m,n}$.

Assuming the surface, there is a one-parameter family of heteroclinic curves joining $M_0$ to $M_1$. 
In the neighborhood of $M_0$, because $\mu_{01}=1<2=\mu_{02}$, all the curves meet $M_0$ tangentially to $\vec{X}_{01}$ except one; 
this exceptional curve meet $M_0$ tangentially to the eigenvector$\vec{X}_{02}$. 
In other words, the asymptotic behavior at $\eta=\pm\infty$ we established in section \ref{behminusinf} characterizes to consider this exceptional curve.

In an autonomous system, if $\varphi^\star(\eta)$ is a heteroclinic orbit then so is $\varphi^\star(\eta+\eta_0)$ for any constant $\eta_0$. This implies that there is one-parameter family of heteroclinics that share the same orbit in phase space. 
\eqref{eq:asym_alpha} indicates that $\bG(0)$, the constant factor of the self-similar strain profile is responsible for fixing the shift $\eta_0$. Let us be precise on this procedure.
Any orbit near $M_0$ has the asymptotic expansion
\begin{equation}
 \varphi(\eta) - M_0 = \kappa_1e^\eta \vec{X}_{01} + \kappa_2e^{2\eta} \vec{X}_{02} + \text{higher-order terms} \label{eq:expansion}
\end{equation}
as $\eta \rightarrow -\infty$ for some constants $\kappa_1$ and $\kappa_2$. Let $\begin{pmatrix} P(\eta)\\Q(\eta)\\R(\eta) \end{pmatrix}$ be a trajectory
emanating in the direction of $\vec{X}_{02}$ and $\bar\kappa_2$ be the corresponding constant ($\bar\kappa_1=0$ for this trajectory). 
Any shifted trajectory 
$$
 \varphi^\star(\eta) = \begin{pmatrix} p(\eta)\\q(\eta)\\r(\eta) \end{pmatrix} = \begin{pmatrix} P(\eta+\eta_0)\\Q(\eta+\eta_0)\\R(\eta+\eta_0) \end{pmatrix}
$$ 
reparametrizes the same orbit and satisfies the asymptotic behavior
$$e^{-2\eta}\left[\begin{pmatrix}
   P(\eta+\eta_0)\\Q(\eta+\eta_0)\\R(\eta+\eta_0)
  \end{pmatrix}
  - M_0\right]
  \rightarrow {\bar\kappa_2e^{2\eta_0}}\vec{X}_{02}, \quad \text{as $\eta \rightarrow -\infty$.}
$$
We select the shift $\eta_0$ so as to satisfy \eqref{eq:estim}, that is
$$\eta_0 = \frac{1}{2}\log \frac{\bG(0)^{1+m-n}a^{-n}}{\bar\kappa_2}.$$

\begin{remark}
In the course of the selection, it is the parameters $m$ and $n$ of the material law,  the focusing rate $\lambda$, and the shift $\eta_0$ 
that fixes the heteroclinic orbit. From the perspective of the application, we may let $\big(\bG(0),\bU(0))$ the initial size of nonuniformities in the strain and that in the strain rate be the primary parameters instead of $\lambda$ and $\eta_0$. $\lambda$ and $\eta_0$ are then determined by
\begin{equation}
 \lambda = \frac{1+m-n}{2}\Big(\frac{\bU(0)}{\bG(0)} - \frac{2-n}{1+m-n}\Big), \quad \eta_0 = \frac{1}{2}\log \frac{\bG(0)^{1+m}\bU(0)^{-n}}{\bar\kappa_2}, \label{eq:lambda_eta}
\end{equation}
where we exploited $\frac{\bU(0)}{\bG(0)} = a$. \eqref{eq:upperbdd} pinpoints the range  
\begin{equation} \label{eq:upperbdd2}
 \frac{2-n}{1+m-n} \;<\; \frac{\bU(0)}{\bG(0)} \;<\; \frac{2-n}{1-m+n},
\end{equation}
where admissible self-similar solutions are attained. In summary, for each given $(m,n)$, we are looking for a two-parameter family 
of self-similar solutions. The two parameters, the sizes of the initial nonuniformities $\big(\bG(0),\bU(0))$, need to take values
in the range  \eqref{eq:upperbdd2}. 
\end{remark}

\subsection{Asymptotics of the profile}
In a similar fashion to the analysis providing the behavior at $\xi = 0$, we can pursue the asymptotic expansion of the heteroclinic
near $M_1$ on $G$.  Similarly to \eqref{eq:expansion}, we have
\begin{enumerate}
 \item[(a)] if $m-n = \frac{1}{2}$ and $\lambda \ne 1-m$
 \begin{equation}
 \varphi^\star(\eta) - M_1 = \kappa'_1e^{-\eta} \vec{X}_{11} + \kappa'_2\eta e^{- \eta} \vec{X}_{12}' + \text{higher-order terms} \label{eq:expansion2_special}
 \end{equation}
 \item[(b)] otherwise 
   \begin{equation}
 \varphi^\star(\eta) - M_1 = \kappa'_1e^{-\eta} \vec{X}_{11} + \kappa'_2e^{- \frac{1-m+n}{m-n}\eta} \vec{X}_{12} + \text{higher-order terms} \label{eq:expansion2}
\end{equation}
as $\eta \rightarrow \infty$.
\end{enumerate}

The following Proposition collects the calculations on the asymptotic behaviors.
\begin{proposition}
 Let $\big(\bG(\xi),\bV(\xi),\bS(\xi),\bU(\xi)\big)$ be a smooth self-similar profile satisfying \eqref{eq:continuity} and let $\varphi^\star(\eta)=\big(p(\eta),q(\eta),r(\eta)\big)$ be the associated variables defined by \eqref{eq:tvars} and \eqref{eq:ratios}. Suppose $\varphi^\star(\eta)$ is the heteroclinic orbit of \eqref{eq:pqrsystem} that connects $M_0$ to $M_1$. Then, 
 \begin{enumerate}
  \item[(i)] At $\xi = 0$, the self-similar profiles satisfy the boundary conditions
    \begin{equation*}
    \bar{V}(0) = \bar\Gamma_\xi(0) = \bar\Sigma_\xi(0) = \bar{U}_\xi(0)=0, \quad \text{$\bar{ \Gamma}(0), \bar{ U}(0)$ are given parameters.}
  \end{equation*}
  \item[(ii)] The asymptotic behavior as $\xi \rightarrow 0$ is given by
  \begin{equation} \label{eq:ss_asymp0}
  \begin{aligned}
    \bV(\xi) &= \bU(0)\xi + O(\xi^3), & \bG(\xi) &= \bG(0) + O(\xi^2), \\
    \bS(\xi) &= \frac{\bU(0)^n}{\bG(0)^m}+ O(\xi^2), & \bU(\xi) &= \bU(0) + O(\xi^2).
  \end{aligned}
  \end{equation}
  \item[(iii)] The asymptotic behavior as $\xi \rightarrow \infty$ is	
  \begin{enumerate}
   \item if $m-n = \frac{1}{2}$ and $\lambda \ne 1-m$
  \begin{equation} \label{eq:ss_asymp1_special}
  \begin{aligned}
    \bV(\xi) &= O((\log\xi)^{-\frac{1}{3}}), &    \bG(\xi) &= O(\xi^{- \frac{1}{m-n}}(\log\xi)^{\frac{2}{3}}),\\
    \bS(\xi) &= O(\xi(\log\xi)^{-\frac{1}{3}}), &   \bU(\xi) &= O(\xi^{- \frac{1}{m-n}}(\log\xi)^{\frac{2}{3}}),
  \end{aligned}
  \end{equation}
   \item and for all other cases 
  \begin{equation} \label{eq:ss_asymp1}
  \begin{aligned}
    \bV(\xi) &= O(1), &    \bG(\xi) &= O(\xi^{- \frac{1}{m-n}}),\\
    \bS(\xi) &= O(\xi), &   \bU(\xi) &= O(\xi^{- \frac{1}{m-n}}).
  \end{aligned}
  \end{equation}
  \end{enumerate}
 \end{enumerate}
 
\end{proposition}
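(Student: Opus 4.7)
The proposition splits cleanly: parts (i) and (ii) are bookkeeping consequences of the Taylor expansion already executed in the proof of Proposition 4.1, whereas (iii) requires translating the asymptotic expansions \eqref{eq:expansion2}--\eqref{eq:expansion2_special} of the heteroclinic near $M_1$ back into the profile variables via the chain of transformations \eqref{eq:tvars}--\eqref{eq:ratios}.

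For (i), I would simply collect the values already computed in Proposition 4.1: the boundary conditions \eqref{eq:continuity} together with evaluation of \eqref{eq:sseqns} at $\xi=0$ give $\bV(0)=0$, $\bU(0)=a^{\lambda,m,n}\bG(0)$, $\bS(0)=\bG(0)^{-(m-n)}a^n$, and $\bG_\xi(0)=\bS_\xi(0)=\bU_\xi(0)=0$. For (ii), I would differentiate \eqref{eq:sseqns} once more and evaluate at $\xi=0$ to produce $\bG_{\xi\xi}(0)$, $\bS_{\xi\xi}(0)$, $\bU_{\xi\xi}(0)$, which yields \eqref{eq:ss_asymp0}; the $O(\xi^3)$ order for $\bV$ follows because $\bV_{\xi\xi}(0)=\bU_\xi(0)=0$, so the first nonvanishing correction to $\bV(\xi)=\bU(0)\xi$ is cubic.

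For (iii), the plan is to reduce the vector asymptotics to a single scalar ODE. From \eqref{eq:ratios} one has $\tg=p\ts$, $\tv=q\ts/b^{\lambda,m,n}$, $\tu=pr\ts$, so all four residual variables are determined once $\ts(\eta)$ is known. Substituting these identities into the third equation of \eqref{eq:auto0} yields the scalar ODE
\begin{equation*}
  (\log\ts)_\eta = -\tfrac{2(m-n)}{1+m-n} + q(\eta) + \lambda\, p(\eta)\, r(\eta),
\end{equation*}
whose right-hand side converges to $\tfrac{1-m+n}{1+m-n}$ as $\eta\to+\infty$ since $(p,q,r)\to M_1=(0,1,c)$. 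Integrating with the exponential decay rates supplied by \eqref{eq:expansion2} for $(q-1)$ and $pr$ gives $\ts(\eta)\sim K\,e^{\frac{1-m+n}{1+m-n}\eta}$ at leading order. Reading off $\tg\sim p\ts$, $\tv\sim q\ts/b^{\lambda,m,n}$, $\tu\sim pr\ts$ and converting back through \eqref{eq:tvars} with $\eta=\log\xi$ produces \eqref{eq:ss_asymp1}; the exponent $-1/(m-n)$ in $\bG$ and $\bU$ and the exponent $1$ in $\bS$ emerge after combining $\alpha$, $\beta$, $-\alpha(m-n)$ with the slower of the two rates $\mu_{11},\mu_{12}$ from \eqref{eq:expansion2}.

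The main obstacle is the resonant subcase $m-n=\tfrac{1}{2}$, $\lambda\ne 1-m$, where $\mu_{11}=\mu_{12}=-1$ yields a genuine Jordan block and \eqref{eq:expansion2_special} replaces \eqref{eq:expansion2}. Here $\varphi^\star-M_1$ carries mixed $\eta e^{-\eta}$ contributions along the generalized eigenvector $\vec X'_{12}$; this polynomial-exponential coupling propagates through the identities $\tg=p\ts$, $\tv=q\ts/b^{\lambda,m,n}$ and, after substituting $\eta=\log\xi$, generates the logarithmic factors $(\log\xi)^{2/3}$ and $(\log\xi)^{-1/3}$ of \eqref{eq:ss_asymp1_special}. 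The careful bookkeeping of which of $\bG,\bV,\bS,\bU$ receives which log power is constrained by the internal consistency $\bS=\bG^{-m}\bU^n$, which forces $-m\cdot\tfrac{2}{3}+n\cdot\tfrac{2}{3}=-\tfrac{1}{3}$ and serves as a useful check on the calculation.
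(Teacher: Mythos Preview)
Your handling of (i)--(ii) is the same as the paper's: both simply collect the Taylor data already computed in the proof of Proposition~\ref{lem:alpha}.

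For (iii), your scalar ODE for $\log\ts$ is a legitimate alternative to the paper's direct use of the algebraic reconstruction formulas \eqref{eq:recon}; either route reaches the same conclusions. However, two steps in your argument are genuine gaps.

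First, you never justify $\kappa'_2\ne 0$ in \eqref{eq:expansion2}. Without this, the leading decay rate of $p(\eta)$ is undetermined and the whole computation of $\bG,\bU$ collapses. The paper supplies the missing argument: the plane $p=0$ is invariant for \ref{eq:pqrsystem}, and on that plane the $q$-equation decouples, so the in-plane heteroclinic from $M_0$ to $M_1$ can be computed explicitly and is precisely the one with $\kappa'_2=0$. Since $\varphi^\star$ ventures off $p=0$, it cannot coincide with that orbit, forcing $\kappa'_2\ne 0$.

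Second, your identification of the operative rate as ``the slower of the two rates $\mu_{11},\mu_{12}$'' is incorrect reasoning. The reason $p$ decays like $e^{\mu_{12}\eta}$ is structural: the eigenvector $\vec X_{11}$ has zero $p$-component, so the $\mu_{11}$-mode contributes nothing to $p$ at linear order, regardless of which eigenvalue is slower. When $m-n<\tfrac12$ one has $|\mu_{12}|>|\mu_{11}|$, so $\mu_{12}$ is actually the \emph{faster} rate; your heuristic would then select $\mu_{11}=-1$ and produce $\bG\sim\xi^{-2}$ instead of $\xi^{-1/(m-n)}$. The same structural observation drives the resonant case: it is $p\sim\kappa'_2\,\eta e^{-\eta}$ (from the generalized eigenvector $\vec X'_{12}$ alone, again because $\vec X_{11}$ has no $p$-component) together with the exponent $\tfrac{1}{1+m-n}=\tfrac{2}{3}$ in the reconstruction $\tg=(pr^n)^{1/(1+m-n)}$ that manufactures the $(\log\xi)^{2/3}$ factor---not a generic ``polynomial-exponential coupling''.
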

\begin{proof}
  $(i)$ and $(ii)$ were verified in the proof of  Proposition \ref{lem:alpha}. To show $(iii)$, the asymptotic behavior of $\varphi^\star(\eta)$ in a neighborhood of $M_1$ is investigated. The asymptotic behavior of a nonlinear problem near the hyperbolic equilibrium point $M_1$ is determined by the associated linearized problem. 
As the orbit $\varphi^\star(\eta)$ lies in the stable manifold of $M_1$,  its asymptotic behavior  in the neighborhood of $M_1$ is expressed by \eqref{eq:expansion2_special} for the case $m-n= \frac{1}{2}$ and $\lambda \ne 1-m$ and by \eqref{eq:expansion2} for the rest of cases. We first focus on the latter.

For the orbit $\varphi^\star(\eta)$, one can further exclude the possibility that the coefficient $\kappa'_2 = 0$ in \eqref{eq:expansion2}.
Indeed,  the plane $p=0$ is invariant for the dynamical system \eqref{eq:pqrsystem}, and the corresponding orbits on the plane $p=0$ can be calculated. 
In this case  the equation for $q$ decouples and one can integrate for $q$ and $r$. 
In particular, the heteroclinic that connects $M_0$ to $M_1$ on the plane $p =0$  can be calculated and has  $\kappa'_2 = 0$ for its coefficient. 
Since the orbit $\varphi^\star(\eta)$ ventures out of the plane $p=0$, we conclude that for this one $\kappa'_2\ne0$. 

In addition, observe that $p \rightarrow 0$ and the $p$-component of $\vec{X}_{11}$ vanishes, which implies 
$$p = \kappa'_2e^{- \frac{1-m+n}{m-n}\eta} + \text{higher-order terms}, \quad q\rightarrow 1, \quad r\rightarrow c\quad  as ~~\eta \rightarrow \infty.$$
The reconstruction formulas for $\big(\tv,\tg,\ts,\tu\big)$ from $\big(p,q,r\big)$ are 
\begin{equation}
\begin{aligned}
\tv &= \frac{1}{b}\Big( p^{-(m-n)}q^{1+m-n}r^n \Big)^{ \frac{1}{1+m-n}},&
\tg &= \Big( pr^n \Big)^{ \frac{1}{1+m-n}},\\
\ts &= \Big( p^{-(m-n)}r^n \Big)^{ \frac{1}{1+m-n}},&
\tu &= \Big( pr^{1+m} \Big)^{ \frac{1}{1+m-n}}.
\end{aligned}\label{eq:recon}
\end{equation}
Hence, we conclude
\begin{equation*}
  \tv \sim e^{ \frac{1-m+n}{1+m-n}\eta}, \quad \tg \sim e^{-\frac{1-m+n}{(m-n)(1+m-n)}\eta}, \quad   \ts \sim e^{\frac{1-m+n}{1+m-n}\eta}, \quad \tu \sim e^{-\frac{1-m+n}{(m-n)(1+m-n)}\eta} 
\end{equation*}
as $\eta \rightarrow \infty$ and then setting  $\xi = e^\eta$ and \eqref{eq:tvars} provides $(iii)$-(b).

For the special case when $m-n= \frac{1}{2}$ and $\lambda \ne 1-m$, with same reasoning $\kappa_2'\ne0$ in \eqref{eq:expansion2_special} and the $p$-component of $\vec{X}_{11}$ vanishes but we have
$$p = \kappa'_2 \eta e^{-\eta} + \text{higher-order terms}, \quad q\rightarrow 1, \quad r\rightarrow c\quad  as ~~\eta \rightarrow \infty.$$
Straightforward calculations again from \eqref{eq:recon} gives $(iii)$-(a).
\end{proof}

%
%
%

\section{Existence of the heteroclinic orbit} \label{sec:proof}
In the preceding section, we identified the heteroclinic of \eqref{eq:pqrsystem} on the hypothesized surface $G = W^u(M_0) \cap W^s(M_1)$. We are now in a position to apply geometric singular perturbation theory to achieve the surface and the heteroclinic orbit. Our goal is to prove the following theorem.
\begin{theorem}\label{thm:1} Let $\Lambda$ be a domain of the tuple $(\lambda,m,n)\in\mathbb{R}^3$ defined by 
 \begin{align}
  0< m \le 1 \quad&\text{(strain softening with $m \le 1$)}, \label{eq:a1}\\
  n>0 \quad&\text{(rate sensitivity)}, \label{eq:a2}\\
  -m+n<0 \quad&\text{(unstable regime)}, \label{eq:a3}\\
  0< \lambda < \frac{(2-n)(m-n)}{1-m+n} \quad&\text{(strain must be increasing)}. \label{eq:a4}
\end{align}
 For each $(\lambda,m,0) \in \Lambda$, there is $n_0( \lambda,m)$, such that for $n \in (0, n_0)$, $(\lambda,m,n) \in \Lambda$ and the system \eqref{eq:pqrsystem} admits a heteroclinic orbit joining equilibrium $M_0^{\lambda,m,n}$ to equilibrium $M_1^{\lambda,m,n}$ with the following property
 \begin{equation} 
     e^{-2\eta}\left[\begin{pmatrix}
   p(\eta) \\ q(\eta) \\ r(\eta)
  \end{pmatrix}
  - M_0\right] \rightarrow \kappa\vec{X}_{02}, \quad \text{for some $\kappa>0$}. \label{eq:a5}
 \end{equation}
\end{theorem}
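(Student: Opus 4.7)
The plan is to treat \ref{eq:pqrsystem} as a fast--slow system with small parameter $n$. On the fast time $\tau=\eta/n$ the variables $(p,q)$ are frozen and the equation $r' = r F_3(p,q,r;n,\lambda,m)$ acts alone, so $n\to 0$ yields a one-dimensional layer problem whose nontrivial equilibria (for $r>0$) form the critical manifold $\mathcal{M}_0 = \{r=\Psi(p,q;\lambda,m)\}$, with $\Psi$ solving $F_3(p,q,\Psi;0)=0$. A direct computation shows $\bar M_0=(0,0)$ and $\bar M_1=(0,1)$ both lie on $\mathcal{M}_0$, with $\Psi(0,0)$ and $\Psi(0,1)$ reproducing $a$ and $c$ at $n=0$. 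Normal hyperbolicity is equivalent to $\partial_r(rF_3)|_{\mathcal{M}_0} = \tfrac{m}{\lambda}r>0$, which holds uniformly on any compact subset of $\mathcal{M}_0 \cap \{r>0\}$. Fenichel's theorem (reviewed in the appendix) then provides, for $0<n<n_0(\lambda,m)$, a locally invariant, normally hyperbolic $C^1$-manifold $\mathcal{M}_n$ lying $O(n)$-close to $\mathcal{M}_0$ and carrying a slow flow that converges smoothly to the $n=0$ reduced flow.

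Next I would analyze the planar reduced flow on $\mathcal{M}_0$ obtained by substituting $r=\Psi(p,q)$ into the $(p,q)$-equations. Its only equilibria in the closed first quadrant are $\bar M_0$ and $\bar M_1$. Linearization at $\bar M_0$ at $n=0$ yields slow eigenvalues $2$ and $1$, with the eigenvector for $\mu=2$ having nonzero $p$-component and faithfully projecting $\vec{X}_{02}$ onto $\mathcal{M}_0$; its one-dimensional strong unstable manifold enters the open quadrant $\{p>0,q>0\}$. Linearization at $\bar M_1$ at $n=0$ yields slow eigenvalues $-1$ and $-\tfrac{1-m}{m}$, making $\bar M_1$ a stable node. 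The orbit I seek is the forward trajectory of the strong unstable manifold of $\bar M_0$.

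To identify its $\omega$-limit I would invoke the Poincar\'e--Bendixson theorem on $\mathcal{M}_0$. The boundary of the candidate trapping region $R\subset\mathcal{M}_0$ consists of the invariant line $p=0$, the axis $q=0$ (along which $\dot q|_{q=0} = b\,p\,\Psi \geq 0$ forces re-entry into $\{q>0\}$), and upper bounding curves drawn from the nullclines of $\dot p$ and $\dot q$ along which the vector field points inward. Dulac's criterion with an integrating factor of the form $p^\alpha q^\beta$, or equivalently monotonicity of $q$ along interior orbits, then rules out periodic orbits in $R$. Poincar\'e--Bendixson forces the $\omega$-limit of the strong unstable trajectory to be an equilibrium, and since $\bar M_0$ is totally unstable the limit must be $\bar M_1$. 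Fenichel persistence transfers the orbit to $\mathcal{M}_n$ for $0<n<n_0$: the equilibria $M_0^{\lambda,m,n}$ and $M_1^{\lambda,m,n}$ remain on $\mathcal{M}_n$ with slow linearizations that are $O(n)$-perturbations of the $n=0$ ones, the $n$-independent eigenvalue $\mu_{02}=2$ continues to dominate $\mu_{01}=1$, and the transverse connection on $\mathcal{M}_0$ persists as a heteroclinic on $\mathcal{M}_n$ tangent to $\vec{X}_{02}$; combined with Proposition \ref{lem:alpha} this yields \eqref{eq:a5}.

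The hardest step will be the planar Poincar\'e--Bendixson argument. I must identify explicit bounding curves for $R$ so that $R$ stays inside $\{\Psi>0\}$ (thus avoiding the curve where $\mathcal{M}_0$ meets the trivial sheet $\{r=0\}$ and where the reduction breaks down), the vector field points inward along the entire boundary, and a Dulac or monotonicity argument actually applies. Although the strong-unstable branch at $\bar M_0$ is automatically unique as a smooth curve (because the eigenvalues $1<2$ are distinct), the crucial structural input is that this trajectory must remain inside the compact region of validity of Fenichel's reduction for all forward times, which is itself the content of the Poincar\'e--Bendixson step.
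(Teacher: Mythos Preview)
Your outline is correct and matches the paper's approach: Fenichel reduction to a two-dimensional slow manifold followed by a Poincar\'e--Bendixson argument for the reduced planar flow. Two points where the paper is sharper than your sketch:

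\textbf{The trapping region.} You propose building $R$ from nullclines and then invoking Dulac to exclude periodic orbits. The paper's choice is cleaner and avoids Dulac entirely: take $T$ to be the triangle in the $(p,q)$-plane bounded by $p=0$, $q=0$, and a level line $h^{\lambda,m,0}(p,q)=\underbar r$ with $0<\underbar r<c^{\lambda,m,0}$. Because the level sets of $h^{\lambda,m,0}$ are straight lines $q+\lambda\underbar r\,p=\text{const}$, the inward normal component of the reduced field on the hypotenuse can be computed explicitly and shown to be bounded below by a strictly positive constant $\delta$. This strict positivity is what lets the trapping property persist to the perturbed reduced system $(R)^{\lambda,m,n}$ for $n>0$ small, keeping the orbit inside the region where the Fenichel chart is valid. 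Since the only equilibria in $T$ are $(0,0)$ and $(0,1)$, a periodic orbit in the open triangle would have to enclose an equilibrium and cannot; Dulac is unnecessary.

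\textbf{The $m=1$ degeneracy.} Your persistence-of-connection argument (``the transverse connection on $\mathcal M_0$ persists'') is fragile at $m=1$: there $\bar M_1$ loses hyperbolicity on $\mathcal M_0$, the slow eigenvalue $-\tfrac{1-m}{m}$ vanishes, and $\bar M_1$ is replaced by an entire line of equilibria, so at $n=0$ the strong-unstable orbit of $\bar M_0$ does not land at $(0,1)$. The paper sidesteps this by arguing directly on the perturbed reduced system $(R)^{\lambda,m,n}$ for $n>0$: the triangle $T$ remains positively invariant (by the $\delta$-margin above), the only equilibria in $T$ for $n>0$ are $(0,0)$ and $(0,1)$, and Poincar\'e--Bendixson forces the $\vec X_{02}$-orbit to converge to $(0,1)$. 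In other words, the paper perturbs the trapping region, not the connecting orbit.
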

In the rest of this section, we divide the proof into several steps:
\begin{enumerate}
 \item[(i)] In preparation, we specify the two-dimensional {\it critical manifold} $G^0$. We inspect its normal hyperbolicity.
 \item[(ii)] Via geometric singular perturbation theory we continue this invariant surface to $n>0$ attaining the surface $G^n$. 
 \item[(iii)] We apply the Poincar\'e-Bendixson theorem to show the existence of the target heteroclinic orbit on $G^n$.
\end{enumerate}

\subsection{Critical manifold} \label{sec:crt_mfd}
The system in {\it fast scale} with the independent variable $\tilde{\eta} = \eta/n$ is
{\small
\begin{align} 
 p^\prime &=np\Big( ~~~~~~\frac{1}{ \lambda }\big(r - \frac{2-n}{1+m-n}\big) - \frac{1-m+n}{1+m-n} &+&1-q- \lambda p r\Big), \nonumber \\
 q^\prime &=nq\Big(                                                                          &+&1-q- \lambda p r\Big) + nb^{\lambda,m,n}pr, \tag*{($\tilde{P}$)\textsuperscript{$\lambda,m,n$}}\label{eq:pqr_fast} \\
 r^\prime&=r\Big( \frac{m-n}{ \lambda }\big(r - \frac{2-n}{1+m-n}\big) + \frac{1-m+n}{1+m-n} &-&1+q+ \lambda p r\Big)\nonumber \\
 &=:f^{\lambda,m,n}(p,q,r), \nonumber
\end{align}
}

\noindent where we denoted $(\cdot)^\prime = \frac{d}{d\tilde{\eta}}(\cdot)$. 
The right-hand side of the equation on $r$ is denoted by $f^{\lambda,m,n}(p,q,r)$. We specify the {\it critical manifold} $G^{\lambda,m,0}$ in the below that is a compact subset of $\{(p,q,r)\;|\; f^{\lambda,m,0}(p,q,r)=0\}$. The latter set consists of the equilibria of the system $\tilde{(P)}^{\lambda,m,0}$. 

In the region $r>0$, one solves the algebraic equation $f^{\lambda,m,0}(p,q,r)=0$,
\begin{equation}
r=h^{\lambda,m,0}(p,q) = \frac{\frac{m}{ \lambda} \frac{2}{1+m}-\frac{1-m}{1+m} + 1 - q}{\frac{m}{ \lambda} + \lambda p}, \label{eq:hn0}
\end{equation}
from which we notice that the contour lines are straight lines; after rearranging, 
\begin{equation}
 q + \lambda \underbar{r}p = \frac{2m}{1+m}-\frac{m}{ \lambda } \big( \underbar{r} - \frac{2}{1+m} \big), \quad \text{for  $h^{\lambda,m,0}(p,q)=\underbar{r}$.} \label{eq:level}
\end{equation}
In view of \eqref{eq:level}, the contour lines in the $pq$-plane sweep out the first quadrant from the origin. See \eqref{fig:contour}. More precisely, the contour line passes the origin when $\underbar{r}=a^{ \lambda,m,0}$ at the same time as its lift in the $pqr$-space passes the equilibrium $M_0^{ \lambda,m,0}$. As $\underbar{r}$ decreases, the contour line intersects the $p$ and $q$ axes and becomes steeper. When $\underbar{r}$ reaches $c^{ \lambda,m,0}$, the contour line passes $(0,1)$ at the same time as its lift passes $M_1^{ \lambda,m,0}$. $\underbar{r}$ then further decreases to $0$. 

Note that the inequality \eqref{eq:a4} implies $c^{ \lambda,m,0}>0$. We let $T$ be the triangle enclosed by the $p$-axis, $q$-axis and the one contour line of \eqref{eq:level} with $0<\underbar{r} < c^{\lambda,m,0}$. We choose $D \supset\supset T$ whose compact closure $\bar{D}$ is strictly away from $r=0$ plane. The critical manifold for each $\lambda$ and $m$ is defined by 
\begin{equation}
 G^{\lambda,m,0} = \{(p,q,r) \in \bar{D} \;|\; r=h^{\lambda,m,0}(p,q)\}.
\end{equation}

\begin{figure}[h]
  \centering
  \subfloat[phase space]{
    \psfrag{p}{$p$}
    \psfrag{q}{$q$}
    \psfrag{r}{$r$}
    \psfrag{M0}{\hskip -70pt$M_0$}
    \psfrag{M1}{\hskip -45pt$M_1$}
    \label{fig:graph}\includegraphics[width=5cm]{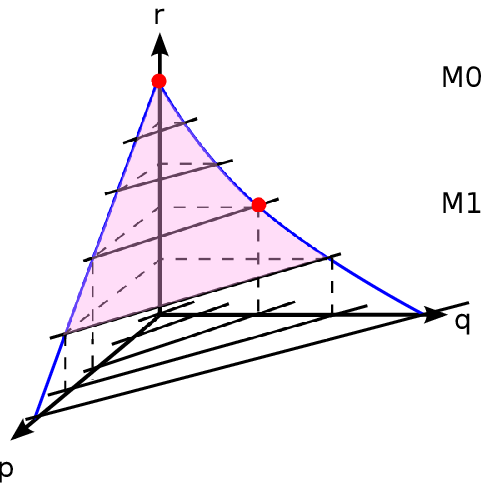}
  } \quad \qquad \qquad
  \subfloat[Contours on the $pq$-plane]{
    \psfrag{p}{$p$}
    \psfrag{q}{$q$}
    \psfrag{T}{\hskip 12pt$\mathbf{T}$}
    \psfrag{1}{}
    \psfrag{0}{\hskip -25pt$h^0(p,q)=a$}
    \psfrag{2}{\hskip -25pt$h^0(p,q)=c$}
    \psfrag{3}{\hskip -25pt$h^0(p,q)=\underbar{r}$}
    \psfrag{4}{\hskip -25pt$h^0(p,q)=0$}  
    \label{fig:contour}\includegraphics[width=5cm]{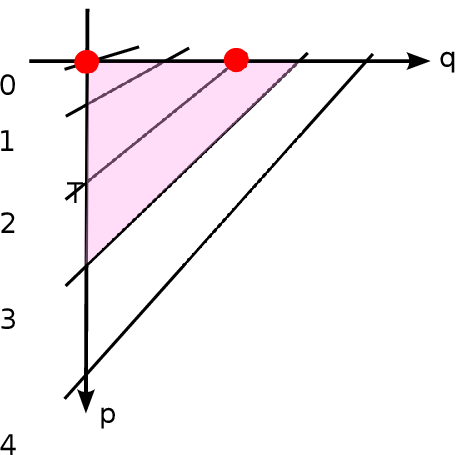}
  }
  \caption{Critical manifolds $G^{\lambda,m,0}=\big(p,q,h^{ \lambda,m,0}(p,q)\big)$, $0<m\le1$.}
  \label{fig:level}
\end{figure}

\begin{lemma} \label{lem:normal_hyper}
 $G^{\lambda,m,0}$ is a normally hyperbolic invariant manifold with respect to the system $(\tilde{P})^{ \lambda,m,0}$.
\end{lemma}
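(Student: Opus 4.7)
The plan is to verify directly the two defining properties of a normally hyperbolic invariant manifold for the fast system $(\tilde{P})^{\lambda,m,0}$: invariance, and a uniform spectral gap between tangential and normal directions.

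First I would exploit the fact that when $n=0$ the fast system collapses drastically: the equations for $p$ and $q$ carry an explicit factor of $n$ on the right-hand side, so $p' \equiv 0$ and $q' \equiv 0$, while $r' = f^{\lambda,m,0}(p,q,r)$. Consequently every point where $f^{\lambda,m,0}$ vanishes is an equilibrium of $(\tilde{P})^{\lambda,m,0}$. By the construction of the critical manifold as a graph $r=h^{\lambda,m,0}(p,q)$ of the positive root of $f^{\lambda,m,0}(p,q,r)=0$ over the compact set $\bar D$, the set $G^{\lambda,m,0}$ is contained in this equilibrium locus and is therefore trivially invariant (in fact, pointwise fixed) under the flow.

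Next I would verify normal hyperbolicity by reading off the spectrum of the Jacobian at an arbitrary point of $G^{\lambda,m,0}$. Since $p'=q'=0$ identically, the Jacobian has the block form
\begin{equation*}
  J(p,q,r) = \begin{pmatrix} 0 & 0 & 0 \\ 0 & 0 & 0 \\ \partial_p f^{\lambda,m,0} & \partial_q f^{\lambda,m,0} & \partial_r f^{\lambda,m,0} \end{pmatrix},
\end{equation*}
whose characteristic polynomial is $-\mu^2\bigl(\partial_r f^{\lambda,m,0}-\mu\bigr)$. The eigenvalues are therefore $0,0,\partial_r f^{\lambda,m,0}$. Writing $f^{\lambda,m,0}=r\,g^{\lambda,m,0}$ with $g^{\lambda,m,0}=\tfrac{m}{\lambda}(r-\tfrac{2}{1+m})+\tfrac{1-m}{1+m}-1+q+\lambda p r$ and using that $g^{\lambda,m,0}\equiv 0$ on the graph, I get
\begin{equation*}
 \partial_r f^{\lambda,m,0}\big|_{r=h^{\lambda,m,0}(p,q)} \;=\; h^{\lambda,m,0}(p,q)\Bigl(\tfrac{m}{\lambda}+\lambda p\Bigr).
\end{equation*}
Since $\bar D$ was chosen compact, contained in the first quadrant, and strictly bounded away from the plane $r=0$, both $h^{\lambda,m,0}$ and $\tfrac{m}{\lambda}+\lambda p$ admit strictly positive lower bounds there, yielding a uniform positive spectral value $\partial_r f^{\lambda,m,0}\ge c_0>0$ on $G^{\lambda,m,0}$.

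It remains to check that the two zero eigenvalues are exactly tangential. The $0$-eigenspace of $J$ at a point of $G^{\lambda,m,0}$ is the plane $\{(v_1,v_2,v_3):\partial_p f\,v_1+\partial_q f\,v_2+\partial_r f\,v_3=0\}$, i.e.\ the kernel of $\nabla f^{\lambda,m,0}$, which coincides with the tangent plane to the level set $\{f^{\lambda,m,0}=0\}$ and hence with $T_{(p,q,r)}G^{\lambda,m,0}$ since $G^{\lambda,m,0}$ is a smooth graph. Therefore the tangential spectrum is $\{0\}$ with multiplicity two and the normal spectrum consists of the single value $\partial_r f^{\lambda,m,0}$, uniformly bounded away from zero. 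This is precisely the definition of normal hyperbolicity (with $G^{\lambda,m,0}$ an unstable slow manifold, since the normal eigenvalue is positive), so the lemma follows. There is no real obstacle here beyond keeping track of the compactness-based uniform bound; the key structural point is the explicit factor of $n$ on the right-hand sides of the $p$- and $q$-equations, which is what makes the tangent/normal splitting align with the natural coordinates at $n=0$.
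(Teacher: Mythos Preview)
Your proposal is correct and follows essentially the same route as the paper: both compute the Jacobian of $(\tilde P)^{\lambda,m,0}$ on the graph, find the block-triangular structure with vanishing $p$- and $q$-rows, and identify the single nonzero eigenvalue as $h^{\lambda,m,0}(p,q)\bigl(\tfrac{m}{\lambda}+\lambda p\bigr)>0$. Your additional check that the zero eigenspace coincides with the tangent plane is not needed under the paper's graph-based definition of normal hyperbolicity (Definition~\ref{def:nhim}), which only requires that $D_r f^{\lambda,m,0}$ have no zero-real-part eigenvalues, but it does no harm.
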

\begin{proof}
We linearize the system $(\tilde{P})^{ \lambda,m,0}$ around $G^{\lambda,m,0}$, and show that $0$ is the eigenvalue with a multiplicity of exactly $2$. Let the perturbations of $p$, $q$, and $r$ be $P$, $Q$, and $R$, respectively. After discarding  terms higher than the first order, we obtain
\begin{align*}
 \begin{pmatrix} {P}^\prime\\ {Q}^\prime \\ {R}^\prime \end{pmatrix} =
 \begin{pmatrix} 0 & 0& 0\\ 0 & 0 & 0\\ \lambda (h^{\lambda,m,0})^2 & h^{\lambda,m,0} & ( \frac{m}{ \lambda} + \lambda p )h^{\lambda,m,0} \end{pmatrix} \begin{pmatrix} {P}\\ {Q} \\ {R} \end{pmatrix}.
\end{align*}
The coefficient matrix has eigenvalues of $0$ and $( \frac{m}{ \lambda} + \lambda p )h^{\lambda,m,0}$. Since we take $h^{\lambda,m,0}$ away from zero and $p \ge 0$, the latter eigenvalue is strictly greater than zero. Thus, $0$ is an eigenvalue with multiplicity $2$. 
\end{proof}

\subsubsection{Flow on the critical manifold : the case $m=1$}

The marginal case $m=1$ provides closer detail. If one substitutes $m = 1$, $n = 0$, $h^{\lambda,1,0}(p,q)$ in the first two equations of \eqref{eq:pqrsystem}, the resulting system can be explicitly solved. The general solution on the graph is a family of parabolae $p=kq^2$ and $r=h^{\lambda,1,0}(p,q)$. This includes the two extremes $p=0$ and $q=0$, where $k$ takes $0$ and $\infty$ respectively. See \eqref{fig:hn0m1}. We focus on discussing two points: 1) In an effort to apprehend the flow of the rest of cases, we remark a few features for this marginal case, which in turn persist under the perturbation; and 2) we report features that do not persist too. These features do not play any role in our study, but this degenerate case is described here for clarity.

We address the first point. Emanating from $M_0^{ \lambda,1,0}$
in \eqref{fig:hn0m1_b} is a family of parabolae. Our interested direction $\vec{X}_{02}$ and the other $\vec{X}_{01}$ are indicated near $M_0^{ \lambda,1,0}$ by a dotted arrow. The family of parabolae is manifesting the fact that orbit curves meet $M_0^{ \lambda,1,0}$ tangentially to $\vec{X}_{01}$; one exception is the degenerate straight line that emanates in $\vec{X}_{02}$, which is depicted as the green one in \eqref{fig:hn0m1}, the target orbit. Another observation from the $pq$-plane is that the flow in the first quadrant far away from the origin is {\it inwards}. More precisely, as illustrated in \eqref{fig:hn0m1_b}, whenever $0<\underbar{r} < 1 = c^{\lambda,1,0}$ the flow on the contour line $\underbar{r} = h^{\lambda,1,0}$ is inwards. We make use of this observation in the proof of \eqref{sec:proof_proof}.

Now, we describe the second point. The crucial difference is that $M_1^{\lambda,1,0}$ is replaced by a line of equilibria $h^{\lambda,1,0}(p,q) = c^{\lambda,1,0}=1$, which is the red line in \eqref{fig:hn0m1}. As a result, each of the parabolae emanated from $M_0^{\lambda,1,0}$ lands at a point among these equilibria. $\vec{X}_{02}$ lies in the plane $q=0$ distinctively from all other cases and the target orbit in particular lands at the $q$-intercept of the line of equilibria. To compare this observation to the statement of \eqref{thm:1}, the target orbit does not connect $M_0^{ \lambda,1,0}$ to $M_1^{ \lambda,1,0}$ but to this $q$-intercept. This observation does not spoil our proof in \eqref{sec:proof_proof} because we assert the persistence of the critical manifold not the target orbit.
\begin{figure}[h]
  \centering
  \subfloat[phase space]{
    \psfrag{p}{$p$}
    \psfrag{q}{$q$}
    \psfrag{r}{$r$}
    \psfrag{M0}{\hskip -65pt$M_0$}
    \psfrag{M1}{\hskip -45pt$M_1$}
    \includegraphics[width=5cm]{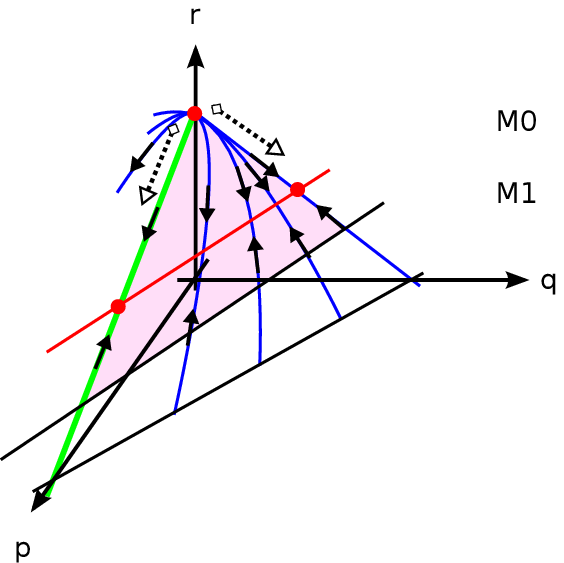}
  } 
  \subfloat[$pq$-plane]{
    \psfrag{p}{$p$}
    \psfrag{q}{$q$}
    \psfrag{T}{\hskip 12pt$\mathbf{T}$}
    \psfrag{2}{\hskip 0pt$h^0(p,q)=1$}
    \psfrag{3}{\hskip 0pt$h^0(p,q)=\underbar{r}$}
    \psfrag{X01}{$\vec{X}_{01}$}
    \psfrag{X02}{$\vec{X}_{02}$}
    \includegraphics[width=7.5cm]{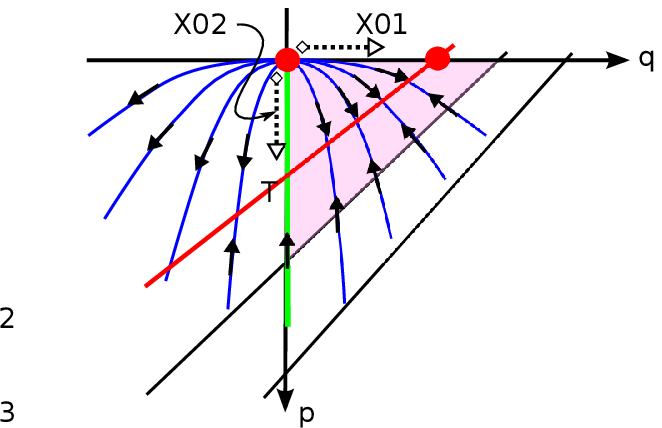} \label{fig:hn0m1_b}
  }
  \caption{Critical manifold $G^{\lambda,1,0}=\big(p,q,h^{ \lambda,1,0}(p,q)\big)$ when $m=1$ and the flow on the manifold.}\label{fig:hn0m1}
\end{figure}


\subsection{Proof of the theorem} \label{sec:proof_proof} \break

\smallskip
\noindent
\begin{proof}
\begin{step}
 Regularly perturbed reduced system.
\end{step}
By Lemma \ref{lem:normal_hyper} and Theorem \ref{thm:fenichel}, there exists $n_0$, such that for $n \in [0, n_0)$, locally invariant manifold $G^{\lambda,m,n}$ with respect to \eqref{eq:pqr_fast} exists. Moreover,   $G^{\lambda,m,n}$ is again given by the graph $(p,q,h^{\lambda,m,n}(p,q))$ on $\bar{D}$. The condition that $G^{\lambda,m,n}$ is disjoint from $r=0$ plane for all $n \in [0, n_0)$ must persist by making $n_0$ smaller if necessary. In addition, $n_0$ is chosen in the valid range of inequalities \eqref{eq:a3} and \eqref{eq:a4}.

After achieving $h^{\lambda,m,n}(p,q)$, substitution of the function in place of $r$ in system \eqref{eq:pqrsystem} leads to  the reduced systems that are parametrized by $\lambda$, $m$, and $n\in[0,n_0)$:
{\small
\begin{equation} \tag*{(${R}$)\textsuperscript{$\lambda,m,n$}} \label{eq:reduced}
\begin{split}
 \dot{p} &=p\Big(\frac{1}{ \lambda }\big(h^{\lambda,m,n}(p,q) - \frac{2-n}{1+m-n}\big) - \frac{1-m+n}{1+m-n} + 1-q- \lambda p h^{\lambda,m,n}(p,q)\Big),\\
 \dot{q} &=q\Big(                                                                          1-q- \lambda p h^{\lambda,m,n}(p,q)\Big) + b^{\lambda,m,n}ph^{\lambda,m,n}(p,q),
\end{split}
\end{equation}
}

\begin{step}
 $M_0^{\lambda,m,n}$ and $M_1^{\lambda,m,n}$ are still on the graph.
\end{step}
In fact, only $M_1^{ \lambda,1,n}$ needs to be checked because, other than that, the equilibrium points are hyperbolic. At $(p,q)=(0,1)$, from the system \eqref{eq:reduced}, we see $\dot{p} = \dot{q} = 0$. Now $\dot{r} = \frac{\partial h^{\lambda,1,n}}{\partial p} \dot{p} + \frac{\partial h^{\lambda,1,n}}{\partial q} \dot{q} = 0$ 
because the derivatives of $h^{\lambda,1,0}$ do not diverge and derivatives of $h^{\lambda,1,n}$ are close to them. This equilibrium point must be $M_1^{\lambda,1,n}$ since there is no other equilibrium point near $M_1^{\lambda,1,n}$. Similar reasoning in fact applies for the hyperbolic equilibrium points.


\medskip
\begin{step}
 $T$ is positively invariant under the flow \eqref{eq:reduced} if $n$ is sufficiently small.
\end{step}
First, we show the claim when $n=0$ and prove that it persists under the perturbation. Consider the system $(R)^{\lambda,m,0}$. On $p=0$, it is invariant; on $q=0$, the inward normal vector is $(0,1)$ and the inward flow $\dot{q} = b^{ \lambda,m,0}ph^{ \lambda,m,0} \ge 0$. On the hypotenuse contour line, if $\underbar{p}$ is the $p$-intercept and $\underbar{q}$ is the $q$-intercept, that is
$$ \underbar{q} = \frac{2m}{1+m}-\frac{m}{ \lambda } \big( \underbar{r} - \frac{2}{1+m} \big), \quad \underbar{p} = \frac{ \underbar{q} }{ \lambda \underbar{r} },$$
then $(-\underbar{q}, -\underbar{p})$ is an inward normal vector.
The inward normal component of the vector field on the line is then
\begin{align*} 
 (-\underbar{q}, &-\underbar{p}) \cdot ( \dot{p}, \dot{q} ) \\
 &=-\underbar{p}\underbar{q}(1-\underbar{q}) - p \frac{\underbar{q}}{m}\Big( \frac{2m}{1+m} - \frac{m}{ \lambda} \big( 1-\frac{2}{1+m} \big) - \underbar{q}\Big)\\
 &\ge -\underbar{p}\underbar{q}(1-\underbar{q}) \\
 &=: \delta >0.
\end{align*}
The inequality comes from $0<\underbar{r} < c^{ \lambda,m,0} \le 1$. $\delta$ is a fixed constant that is strictly positive, proving that the triangle $T$ is invariant.

\begin{figure}[h]
  \centering
  \subfloat[phase space]{
    \psfrag{p}{$p$}
    \psfrag{q}{$q$}
    \psfrag{r}{$r$}
    \psfrag{M0}{\hskip -65pt$M_0$}
    \psfrag{M1}{\hskip -30pt$M_1$}
    \includegraphics[width=5cm]{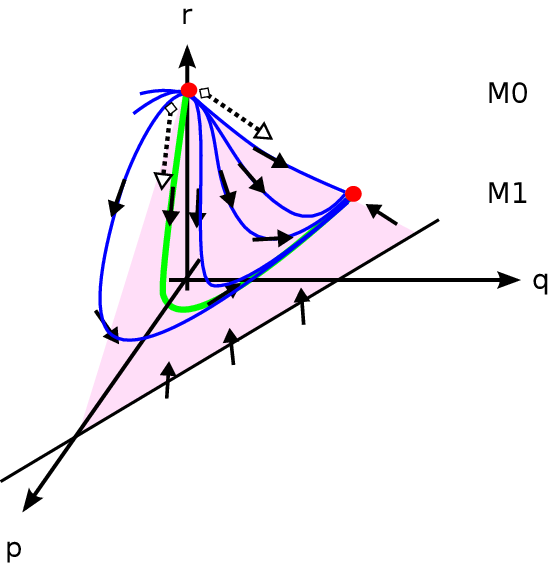}
    } \quad \quad \qquad
  \subfloat[$pq$-plane]{
    \psfrag{p}{$p$}
    \psfrag{q}{$q$}
    \psfrag{T}{\hskip 12pt$\mathbf{T}$}
    \psfrag{3}{\hskip -25pt$h^0(p,q)=\underbar{r}$}
    \psfrag{X01}{$\vec{X}_{01}$}
    \psfrag{X02}{$\vec{X}_{02}$}
    \includegraphics[width=5cm]{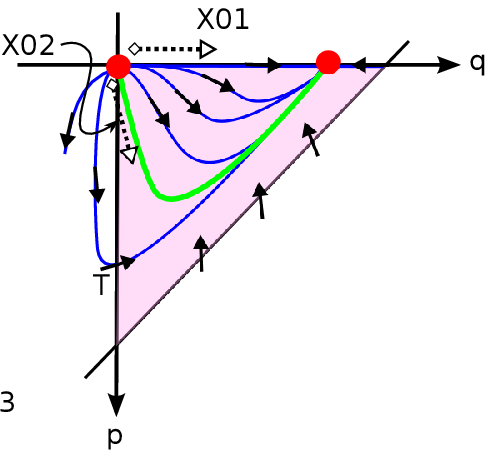}
  }
    \caption{Schematically drawn sketches of the perturbed invariant manifold and the flow. The triangle $T$ was determined by the contour line of the graph $\underbar{r}=h^0(p,q)$ but $T$ is positively invariant under the flow \eqref{eq:reduced} for all $n \in [0, n_0)$.}
\end{figure}


Now, we show that this positively invariant property persists under perturbation. We examine the same triangle $T$ but with  the system ${(R)}^{\lambda,m,n}$ with $n>0$. 
Again, sides of $p=0$ and $q=0$ are invariant or inward for the same reason. Now, the line of the hypotenuse of $T$ is no longer a contour line of $h^{ \lambda,m,n}(p,q)=\underbar{r}$, but $h^{ \lambda,m,n}(p,q)$ remains close to $\underbar{r}$, that is 
$$h^{ \lambda,m,n}(p,q) = \underbar{r} + ng_1(n,p,q), \quad \text{by Taylor theorem.}$$ 
{$g_1$ is uniformly bounded  in $n$, $p$, and $q$.} The inward normal component of the vector field on the line is computed as
\begin{align*} 
 (-\underbar{q}, &-\underbar{p}) \cdot ( \dot{p}, \dot{q} ) \\
 &= -\underbar{q}p\Big(\frac{1}{ \lambda }\big(h(p,q) - \frac{2}{1+m}\big) + \frac{2m}{1+m} -q- \lambda p h(p,q)\Big)-\underbar{p}q(1-q- \lambda p h(p,q)) \\&- \underbar{p}b^{\lambda,m,n}ph(p,q) \\
 &= -\underbar{q}p\Big(\frac{1}{ \lambda }\big(\underbar{r} - \frac{2}{1+m}\big) + \frac{2m}{1+m} -q- \lambda p \underbar{r}\Big)-\underbar{p}q(1-q- \lambda p \underbar{r}) - \underbar{p}b^{\lambda,m,0}p\underbar{r} \\ 
 &+n\Big(-\underbar{q}p\big( \frac{1}{ \lambda } g_1 - \lambda p g_1\big) - \underbar{p}q\big(- \lambda p g_1\big)\Big) - \underbar{p}\big( \frac{b^{\lambda,m,n}-b^{\lambda,m,0}}{n}p\underbar{r} + b^{\lambda,m,n} p g_1\big)\Big)\\
 &=-\underbar{p}\underbar{q}(1-\underbar{q}) - p \frac{\underbar{q}}{m}\Big( \frac{2m}{1+m} + \frac{m}{ \lambda} \big( \frac{2}{1+m}-1 \big) - \underbar{q}\Big)\\
 &+n\Big(-\underbar{q}p\big( \frac{1}{ \lambda } g_1 - \lambda p g_1\big) - \underbar{p}q\big(- \lambda p g_1\big)\Big) - \underbar{p}\big( \frac{b^{\lambda,m,n}-b^{\lambda,m,0}}{n}p\underbar{r} + b^{\lambda,m,n} p g_1\big)\Big)\\
 &\ge \delta + ng_2(n,p,q),
\end{align*}
where $g_2(n,p,q)$ is the expression in the parentheses of the last equality that is multiplied by $n$, which is also uniformly bounded in $n$, $p$, and $q$. We have used $ b^{\lambda,m,n}-b^{\lambda,m,0}=n\frac{(1-m) + 2 \lambda}{(1+m-n)(1+m)}$. 
Therefore, $n_0$ can be chosen, even smaller if necessary, so that the last expression becomes positive. This proves the claim. 
\medskip

Note that $\vec{X}_{02}$ is pointing inward of the triangle $T$ from $(0,0)$. Thus, the orbit emanating in $\vec{X}_{02}$ is continued to the interior of $T$ by the stable(unstable) manifold theorem. The $\omega$-limit set of this orbit cannot contain the limit cycle because when $n>0$, there is no equilibrium point inside of $T$ other than $(0,0)$ and $(0,1)$.  Recall that $(0,0)$ is the unstable node and $(0,1)$ generates the stable subspace. Thus, the Poincar\'e-Bendixson theory (for example in \cite{perko_differential_2001}) implies that the orbit converges to $(0,1)$.  The lifting of this orbit to the three dimensional phase space is the desired heteroclinic orbit. 
\end{proof}

\section{Two-parameter family of focusing solutions}
Theorem \ref{thm:1} determines the orbit curve, and the translation factor $\eta_0$ fixes the one heteroclinic orbit. As stated earlier, $\big(\bG(0),\bU(0)\big)$ determine $\eta_0$ and $\lambda$  by \eqref{eq:lambda_eta}. In summary, for each $\big(\bG(0),\bU(0)\big)$ such that
\begin{equation*}
 \frac{2-n}{1+m-n} \;<\; \frac{\bU(0)}{\bG(0)} \;<\; \frac{2-n}{1-m+n} \, ,
\end{equation*}
and for material parameters $m$ and $n$ this procedure gives rise to a solution of \eqref{eq:system}. By tracing back the nonlinear transformations \eqref{eq:ratios}, \eqref{eq:tvars}, and \eqref{eq:ss}
\begin{align}
 \begin{aligned}
 \gamma(x,t) &= (1 + t)^{ \frac{2-n}{1+m-n}+ \frac{ 2 }{1+m-n}\lambda } \;\bG\Big(x(1 + t)^{\lambda}\Big), \\
 v(x,t) &= (1 + t)^{ \frac{1-m}{1+m-n}+ \frac{ 1-m+n }{1+m-n}\lambda } \;\bV\Big(x(1 + t)^{\lambda}\Big), \\
 \sigma(x,t) &= (1 + t)^{ -\frac{2m-n}{1+m-n} - \frac{ 2(m-n) }{1+m-n}\lambda } \;\bS\Big(x(1 + t)^{\lambda}\Big), \\
 u(x,t) &=   v_x (x,t)  = (1 + t)^{\frac{1-m}{1+m-n}+ \frac{ 2 }{1+m-n}\lambda} \;\bU\Big(x(1 + t)^{\lambda}\Big), \\
 \end{aligned} \label{eq:sssol}
\end{align}
Note that the $\big(\bG,\bV,\bS,\bU\big)$ coincide with the initial nonuniformities of $\big(\gamma,v,\sigma,u\big)$ at $t=0$. 

We next describe the asymptotic behavior of the solutions. We omit the special case $m-n= \frac{1}{2}$ and $\lambda \ne 1-m$. We focus on the remaining cases;
in  the special case $m-n= \frac{1}{2}$ and $\lambda\ne1-m$, a  logarithmic correction will be required.

\medskip \noindent{\bf Initial nonuniformities $\big(\bG,\bV,\bS,\bU\big)$.} \medskip
We first look into the profiles $\big(\bG,\bV,\bS,\bU\big)$ of the initial nonuniformities. 
From \eqref{eq:ss_asymp0} and \eqref{eq:ss_asymp1} we infer that $\bG$ and $\bU$ peak at the origin and decay at the order $\xi^{ -\frac{1}{m-n}}$ as $\xi \rightarrow \infty$. 
They are thus bell-shaped even nonuniformities. On the other hand, $\bV$ is an odd function of $\xi$ connecting $-\bV_\infty$ to $\bV_\infty$ as $\xi$ runs from $-\infty$ to $\infty$, which describes the loading. $\bV_\infty= \lim_{\xi \rightarrow \infty} \bV(\xi)$ is a positive constant. We remark that $-\frac{1}{m-n} < -1$, and in the range of the parameters that we consider, $\bU=\bV_\xi$ is integrable. The stress $\bS=\bG^{-m}\bU^n$, an even function of $\xi$, has the local minimum $\bS(0)=\bG(0)^{-m}\bU(0)^n$ at $\xi=0$ and the asymptotic linear growth as $|\xi| \rightarrow \infty$.
\medskip

Below, we summarize the localizing behaviors of $\big(\gamma,v,\sigma,u\big)$. Due to the similarity structure $\xi=x(1+t)^\lambda$, the solution profiles shrink toward 
the origin as time proceeds. Second, 
due to the multiplier polynomials of $t$ in \eqref{eq:sssol}, the heights of the profiles increase (or decrease). The actual rate of growth or decay  at a fixed point $x\ne0$ is calculated by taking the shrinking effect into account, contrasting the behavior near the origin to the rest of the points.

\medskip {\bf$\bullet$ strain:}
We let the growth order $t^{ \frac{2-n}{1+m-n}}$ be critical. If $m=1$, it is of linear order and corresponds to that of the uniform shearing solution. If $0<m<1$, it is superlinear. We observe
\begin{align*}
 \gamma(0,t) &= (1+t)^{ \frac{2-n}{1+m-n}+ \frac{ 2 }{1+m-n}\lambda }\,\bG(0),\\
 \gamma(x,t) &\sim t^{ \frac{2-n}{1+m-n}- \frac{ 1-m+n }{(1+m-n)(m-n)}\lambda } |x|^{-\frac{1}{m-n}} \quad \text{as $t \rightarrow \infty$ for $x\ne 0$},
\end{align*}
and the tip of the strain $\gamma(0,t)$ grows supercritically and $\gamma(x,t)$, $x\ne 0$ grows subcritically as $t \rightarrow \infty$.

\medskip {\bf$\bullet$ strain rate:}
We observe
\begin{align*}
 u(0,t) &= (1+t)^{ \frac{1-m}{1+m-n}+ \frac{ 2 }{1+m-n}\lambda }\,\bU(0),\\
 u(x,t) &\sim t^{ \frac{1-m}{1+m-n}- \frac{ 1-m+n }{(1+m-n)(m-n)}\lambda } |x|^{-\frac{1}{m-n}} \quad \text{as $t \rightarrow \infty$ for $x\ne 0$},
\end{align*}
whose growth orders are by definition less by $1$ than those of the strain. The tip of the strain rate $u(0,t)$ certainly grows to $\infty$ as $t \rightarrow \infty$. Different from the strain, $u(x,t)$, $x\ne0$ does not necessarily grow as time proceeds. 

\medskip {\bf$\bullet$ velocity:}
The velocity $v(x,t)$ at a fixed time connects the $-v_\infty$ to $v_\infty$ as $x$ runs from $-\infty$ to $\infty$, where $v_\infty = \lim_{x \rightarrow \infty} v(x,t)$. This transition eventually becomes as drastic as the step function as $t \rightarrow \infty$. The limit value $v_\infty\sim t^b$.
\medskip

\medskip {\bf$\bullet$ stress:}
We observe
\begin{align*}
 \sigma(0,t) &= (1+t)^{ -\frac{2m-n}{1+m-n} - \frac{ 2(m-n) }{1+m-n}\lambda}\,\bG(0)^{-m}\bU(0)^n,\\
 \sigma(x,t) &\sim t^{ -\frac{2m-n}{1+m-n} + \frac{ 1-m+n }{1+m-n}\lambda } |x| \quad \text{as $t \rightarrow \infty$ for $x\ne 0$}.
\end{align*}
As the strain localizes near the origin, the stress at the origin collapses quickly. $\sigma(x,t)$, $x\ne0$ also decays to zero but at slower order; $-\frac{2m-n}{1+m-n} + \frac{ 1-m+n }{1+m-n}\lambda$ always is negative.

\appendix
\section{Geometric singular perturbation theory} \label{sec:geom}
In this section, we collected the part of geometric singular perturbation theory that we use. Our work  requires only  the basic technique;  readers are refered
to \cite{fenichel_persistence_1972,fenichel_geometric_1979,jones_geometric_1995,wiggins_normally_1994,KUEHN_2015} for further results and references.

The material below is taken from \cite{jones_geometric_1995,KUEHN_2015}. 
Let us consider the fast-slow system in the fast independent variable
\begin{equation} \label{eq:flow}
 \begin{split}
  x^\prime &=f(x,y,\epsilon),\\
  y^\prime &= \epsilon g(x,y,\epsilon)
 \end{split}
\end{equation}
and its critical case when $\epsilon=0$ 
\begin{equation} \label{eq:flow0}
 \begin{split}
  x^\prime &=f(x,y,0),\\
  y^\prime &= 0.
 \end{split}
\end{equation}
Here, $x\in \mathbb{R}^m$ is called a fast variable and $y\in \mathbb{R}^n$ a slow variable. We assume the vector field of \eqref{eq:flow} has a definition for $(x,y)\in U\subset\mathbb{R}^{m+n}$ an open set and for $\epsilon\in I$, the interval containing $0$. Let 
$$C_0:= \left\{ (x,y)\in U \;|\; f(x,y,0)=0\right\}.$$

To state a version of a theorem of geometric singular perturbation theory, we introduce two notions.
\begin{definition}[Normally hyperbolic invariant manifold in $C_0$ to \eqref{eq:flow0}] \label{def:nhim}
 A subset $S \subset C_0$ is called normally hyperbolic if the $m\times m$ matrix $(D_x f)(p,0)$ of first partial derivatives with respect to the fast variables $x$ has no eigenvalues with zero real part for all $p \in S$.
\end{definition}

\begin{definition}[Local invariance of \eqref{eq:flow}]
 Let $\phi_t(\cdot)$ denote the flow defined by the vector field of \eqref{eq:flow} and $M$ be a compact connected $C^\infty$-manifold with boundary embedded in $U$. $M$ is called a locally invariant manifold if for each $p\in M$, there exists a time interval $I_p=(t_1,t_2)$ such that $0\in I_p$ and $\phi_t(p) \in M$ for all $t\in I_p.$
\end{definition}
Next, we specify three hypotheses: 
\begin{enumerate}
 \item[(H1)] $f,g \in C^{\infty}(U \times I)$,
 \item[(H2)] The set $M_0 \subset C_0$ is a compact manifold, possibly with boundary, and is normally hyperbolic relative to \eqref{eq:flow0} in the sense of \eqref{def:nhim}.
 \item[(H3)] The set $M_0$ is given as the graph of the function $h^0(y) \in C^\infty(\bar{D})$ where $\bar{D} \subset \mathbb{R}^n$ is a compact simply connected domain with $C^\infty$ boundary.
\end{enumerate}
Below is a version of the theorem that is simpler in a sense that it involves a graph rather than a manifold. 
\begin{theorem}[Graph version, theorem 2 in \cite{jones_geometric_1995}] \label{thm:fenichel}
 Under the assumptions (H1),\\ (H2), and (H3), if $\epsilon>0$ is sufficiently small, there is a function $h^\epsilon(y)$, defined on $\bar{D}$, so that the graph $M_\epsilon = \{(x,y)~|~ x=h^\epsilon(y)\}$ is locally invariant to \eqref{eq:flow}. Moreover, $h^\epsilon$ is $C^r(\bar{D})$ for any $r<+\infty$, jointly in $y$ and $\epsilon$.
\end{theorem}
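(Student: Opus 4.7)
The strategy is to cast \eqref{eq:pqrsystem} as a singularly perturbed fast--slow system with $n$ as the small parameter, produce a two-dimensional attracting invariant manifold by Fenichel's theorem, reduce the heteroclinic problem to a planar one on that manifold, and close the argument with Poincar\'e--Bendixson.

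First, I rescale by $\tilde\eta = \eta/n$ to obtain the fast system \eqref{eq:pqr_fast}, in which $(p,q)$ are slow and $r$ is fast. The layer problem at $n=0$ reduces to the algebraic equation $f^{\lambda,m,0}(p,q,r) = 0$, whose branch $r>0$ is solved explicitly as $r=h^{\lambda,m,0}(p,q)$ in \eqref{eq:hn0}. I would take as candidate critical manifold the graph of $h^{\lambda,m,0}$ over a compact simply connected domain $\bar D$ chosen to strictly contain the triangle $T$ bounded by the two coordinate axes and a level curve $\{h^{\lambda,m,0}(p,q) = \underbar r\}$ with $0 < \underbar r < c^{\lambda,m,0}$; the positivity of $c^{\lambda,m,0}$ here is exactly the hypothesis \eqref{eq:a4}. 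Linearizing the fast vector field normal to this graph gives a single nontrivial eigenvalue $\bigl(\tfrac{m}{\lambda}+\lambda p\bigr)h^{\lambda,m,0}(p,q)$ that is bounded away from zero on $\bar D$, establishing normal hyperbolicity (Lemma \ref{lem:normal_hyper}).

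Second, Theorem \ref{thm:fenichel} then furnishes $n_0 = n_0(\lambda,m) > 0$ and, for every $n \in [0,n_0)$, a smooth locally invariant graph $G^{\lambda,m,n} = \{r = h^{\lambda,m,n}(p,q)\}$ on $\bar D$ with $h^{\lambda,m,n} \to h^{\lambda,m,0}$ as $n \to 0^+$. Shrinking $n_0$ further I can enforce $G^{\lambda,m,n} \cap \{r \le 0\} = \emptyset$ and preserve the open condition \eqref{eq:a4}, thereby keeping $(\lambda,m,n) \in \Lambda$. I then check that both $M_0^{\lambda,m,n}$ and $M_1^{\lambda,m,n}$ lie on $G^{\lambda,m,n}$: near the hyperbolic point $M_0$ this follows from the implicit function theorem, while at $M_1^{\lambda,m,n}$ direct substitution of $(p,q)=(0,1)$ into \eqref{eq:pqrsystem} yields $\dot p = \dot q = 0$ and hence $\dot r = (\partial_p h)\dot p + (\partial_q h)\dot q = 0$, so the unique nearby equilibrium on the graph must be $M_1^{\lambda,m,n}$. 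Substitution $r = h^{\lambda,m,n}(p,q)$ into the $(p,q)$ equations produces the reduced planar system \eqref{eq:reduced}.

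Third, I show that $T$ is positively invariant under \eqref{eq:reduced} for all sufficiently small $n$. The two axes $\{p=0\}$ and $\{q=0\}$ are invariant or inward-flowing for every $n$ by inspection of \eqref{eq:reduced}. On the hypotenuse, at $n=0$ a direct computation using $\underbar q = \tfrac{2m}{1+m} - \tfrac{m}{\lambda}\bigl(\underbar r - \tfrac{2}{1+m}\bigr)$ and $\underbar p = \underbar q/(\lambda\,\underbar r)$ yields a strictly positive inward flux $\delta := -\,\underbar p\,\underbar q\,(1-\underbar q) > 0$, where positivity uses $0 < \underbar r < c^{\lambda,m,0} \le 1$. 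For $n>0$ the hypotenuse is no longer exactly a level set, so I would write $h^{\lambda,m,n}(p,q) = \underbar r + n\,g_1(n,p,q)$ on a neighborhood of the hypotenuse with $g_1$ smooth and uniformly bounded, combine with $b^{\lambda,m,n} - b^{\lambda,m,0} = O(n)$, and Taylor-expand to obtain inward flux of the form $\delta + n\,g_2(n,p,q)$ with $g_2$ uniformly bounded, after which a final reduction of $n_0$ keeps the flux strictly positive. This uniform-in-$n$ flux estimate is in my view the main technical obstacle, as it mixes three perturbations simultaneously (of the graph, of the slow vector field, and of the coefficient $b^{\lambda,m,n}$).

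Finally, I conclude via planar phase-plane analysis. Since $\mu_{02}=2 > 1=\mu_{01}$, the strong unstable manifold theorem at $M_0^{\lambda,m,n}$ gives a unique orbit (modulo time translation) emanating tangent to $\vec X_{02}^{\lambda,m,n}$, and that eigenvector points into $\mathrm{int}\,T$. By the previous step the orbit remains in the compact set $T$ for all positive time, so Poincar\'e--Bendixson (e.g.\ \cite{perko_differential_2001}) forces its $\omega$-limit set to be an equilibrium, a periodic orbit, or a polycycle. The only equilibria of \eqref{eq:reduced} in $\bar T$ are $M_0^{\lambda,m,n}$ (unstable node) and $M_1^{\lambda,m,n}$ (which, by restricting the eigenstructure from Section \ref{sec:equi} to $G^{\lambda,m,n}$, inherits a two-dimensional stable manifold within the reduced flow), so a periodic orbit would have to enclose one of these two points in its interior, which is impossible; thus the $\omega$-limit is $M_1^{\lambda,m,n}$. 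Lifting this planar orbit through $r = h^{\lambda,m,n}(p,q)$ yields the required three-dimensional heteroclinic, and the tangency to $\vec X_{02}$ at $M_0$ delivers the asymptotic behavior \eqref{eq:a5} with $\kappa > 0$.
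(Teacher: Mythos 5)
You were asked to prove Theorem \ref{thm:fenichel} itself --- the graph version of Fenichel's persistence theorem for normally hyperbolic critical manifolds --- but your argument never addresses it: your second step explicitly \emph{invokes} Theorem \ref{thm:fenichel} to obtain the perturbed graph $h^{\lambda,m,n}$, so as a proof of the stated theorem the proposal is circular. What you have written is, in substance, the paper's own proof of Theorem \ref{thm:1} (critical manifold \eqref{eq:hn0}, normal hyperbolicity via Lemma \ref{lem:normal_hyper}, persistence of the graph, positive invariance of the triangle $T$ with the $O(n)$ flux correction, then Poincar\'e--Bendixson on the reduced planar system \eqref{eq:reduced}), which the paper carries out in Section \ref{sec:proof_proof}. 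The paper does not prove Theorem \ref{thm:fenichel}; it quotes it from \cite{jones_geometric_1995}, and your proposal likewise leaves the persistence statement entirely unproven.

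A proof of the actual statement requires a different toolkit from anything in your plan: one works with the flow \eqref{eq:flow} near the unperturbed graph $x=h^0(y)$, sets up either the Hadamard graph transform or the Lyapunov--Perron integral equation on a space of Lipschitz graphs over $\bar D$, uses the spectral gap guaranteed by normal hyperbolicity (H2) --- no eigenvalues of $D_xf$ on the imaginary axis --- to make this map a contraction uniformly for small $\epsilon$, identifies the fixed point with $h^\epsilon$, verifies that the resulting graph is only \emph{locally} invariant because orbits may exit through the boundary of $\bar D$ (which is why a boundary modification or overflowing-invariance argument is needed), and finally upgrades Lipschitz continuity to $C^r$ smoothness jointly in $y$ and $\epsilon$ via a fiber-contraction or induction-on-$r$ argument. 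None of these ingredients appears in the proposal, so with respect to the statement in question there is a genuine gap: the conclusion is assumed, not established, and the material you do supply duplicates the application rather than the theorem.
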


\section*{Acknowledgments}
This research was supported by King Abdullah University of Science and Technology (KAUST).

\bibliographystyle{siamplain}

\end{document}